\numberwithin{equation}{section}
\newtheorem{theorem}{Theorem}[section]
\newtheorem{lemma}[theorem]{Lemma}
\newtheorem{proposition}[theorem]{Proposition}
\newtheorem{corollary}[theorem]{Corollary}
\theoremstyle{definition}
\newtheorem{definition}[theorem]{Definition}
\newtheorem{example}[theorem]{Example}
\newtheorem{question}[theorem]{Question}
\theoremstyle{remark}
\newtheorem{remark}[theorem]{Remark}
\numberwithin{equation}{section}
\DeclareMathOperator{\Aff}{Aff}
\DeclareMathOperator{\Sim}{S}
\DeclareMathOperator{\diam}{diam}
\begin{document}

\title{Some generalizations on affine invariant points}

\author{Natalia Jonard-P\'erez}

\subjclass[2010]{Primary 52A20, 54B20, 54H15, 57S20} 

\thanks{The author has been partially supported by grants IN115819 (PAPIIT, UNAM, M\'exico) and 252849 (CONACYT-SEP, M\'exico)}

\address{Departamento de  Matem\'aticas,
Facultad de Ciencias, Universidad Nacional Aut\'onoma de M\'exico, 04510 Ciudad de M\'exico, M\'exico.}

\email{nat@ciencias.unam.mx}

\begin{abstract}

In this note we prove a more general (and topological) version of  Gr\"unbaum's conjecture about affine invariant points. As an application of our result we show that, if we consider the action of the group of similarities,  Gr\"unbaum's conjecture remains valid in other families of convex sets (not
necessarily convex bodies).
\end{abstract}

\maketitle

\section{Introduction}
By a convex body $K\subset\mathbb R^n$ we mean a compact convex subset of $\mathbb R^n$ with a non empty interior. The set of all convex bodies of $\mathbb R^n$ is denoted by $\mathcal K^n_0$. 
We denote by $\Aff(n)$ the group of all non-singular affine maps of $\mathbb R^n$. Namely, $g\in \Aff(n)$ if and only if there exists an invertible linear map $\sigma:\mathbb R^n\to \mathbb R^n$, and a vector $v\in\mathbb R^n$ such that 
$$g(x)=v+\sigma(x),\quad \text{for every }x\in\mathbb R^n.$$

If we equip $\mathcal K^n_0$ with the Hausdorff metric, the natural action of $\Aff(n)$ on $\mathbb R^n$ induces a continuous action of $\Aff(n)$ on $\mathcal K^n_0$ by  the formula
\begin{equation*}
(g, K)\longmapsto gK, \quad
gK=\{g(x)\mid x\in K\}
\end{equation*}
(see \cite{NataliaFM} for more details about this action). 

In his seminal paper \cite{Grunbaum}, B. Gr\"unbaum introduced the notion of an affine invariant point. Namely, an \textit{affine invariant point} is a continuous function $p:\mathcal K_0^n\to \mathbb R^n$ satisfying
$$gp(K)=pg(K),\text{ for every }g\in\Aff(n),\text{ and }K\in\mathcal K_0^n.$$

The centroid, the center of John's ellipsoid or the center of L\"owner's ellipsoid are examples of affine invariant points (see e.g. \cite[3.3 and 3.4]{Grunbaum}). 

We denote by $\mathfrak{P}_n$ the set of all affine invariant points of $\mathcal K_0^n$. 
For every $K\in\mathcal K_0^n$, let $\mathfrak{P}_n(K)$ be the set of all $x\in\mathbb R^n$ such that $x=p(K)$ for some $p\in\mathfrak{P}_n$,  and consider the set
$$\mathfrak{F}_n(K)=\{x\in\mathbb R^n\mid gx=x \text{ for every }g\in\Aff(n)\text{ such that }gK=K\}.$$
For a convex body $K\in\mathcal K^n_0$, it is not difficult to see that  $\mathfrak{P}_n(K)\subset\mathfrak{F}_n(K)$. In \cite{Grunbaum}, Gr\"unbaum conjectured that $\mathfrak{P}_n(K)=\mathfrak{F}_n(K)$, and in the following 50 years several partial results were obtained (see, e.g., \cite{Kuchment, Kuchment E, Meyer}). The first complete proof of this problem was obtained by  O. Mordhorst in \cite{Olaf}. 
In \cite{Iurchenko}, I. Iurchenko gave an alternative proof and in \cite{Jonard} the author gave a shorter proof of this theorem based merely on the topology of the action of $\Aff(n)$ on $\mathcal K^n_0$.

In the language of topological transformation groups, an affine invariant point is simply an $\Aff(n)$-equivariant continuous function from $\mathcal K_0^n$ into $\mathbb R^n$ (see Section ~\ref{s:preliminars}). 
On the other hand, if we denote by $\Aff(n)_K$ the stabilizer  of $K$, then the set 
$\mathfrak{F}_n(K)$  coincides with  the set of all $\Aff(n)_K$-fixed points of $\mathbb R^n$.  Namely, a point $x\in\mathbb R^n$ belongs to $\mathfrak{F}_n(K)$  if and only if $\Aff(n)_K\leq \Aff(n)_x$ ($\Aff(n)_x$  being the stabilizer of $x$).
If we translate Gr\"unbaum's situation to the language of topological transformation groups, we obtain  the following scenario: given a convex body $K\in \mathcal K_0^n$ and $x\in\mathbb R^n$ with $\Aff(n)_K\leq \Aff(n)_x$, Gr\"unbaum's conjecture states that there exists  a continuous and  $\Aff(n)$-equivariant function   $p:\mathcal K_0^n\to \mathbb R^n$ such that $p(K)=x$. 

The problem of finding an equivariant map between $G$-spaces is not an easy one. Some classical results such as the Borsuk-Ulam theorem need a very strong baggage of algebraic topology in order to be proved. 

In this note we use some classical results from the theory of topological transformation groups (\cite{Palais}) in order to  prove the following topological generalization of Gr\"unbaum's conjecture.

\begin{theorem}\label{t:main 2}
Let $G$ be a Lie group. Assume that $X$ is a proper $G$-space and $Y$ a $G$-equiconnected $G$-space such that  the set of all $G$-equivariant maps from $X$ to $Y$ is nonempty. If  $x\in X$ and $y\in Y$ satisfy $G_y\leq G_x$, then there exists a $G$-equivariant map $\psi:X\to Y$ such that $\psi(x)=y$.
 \end{theorem}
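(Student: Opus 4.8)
The plan is to manufacture $\psi$ by prescribing it first on the orbit $Gx$, then extending it $G$-equivariantly over a neighborhood, and finally blending it with the given equivariant map so as to obtain a globally defined $G$-map with $\psi(x)=y$. The two structural hypotheses enter in complementary ways. Since $G$ is a Lie group and $X$ is a proper $G$-space, Palais' theory provides a slice at $x$ and a tube around the orbit; in particular $G_x$ is compact, the orbit $Gx$ is closed and $G$-invariant, and the orbit space $X/G$ is paracompact, which will later furnish $G$-invariant Urysohn functions. Since $Y$ is $G$-equiconnected, I may fix a $G$-equivariant equiconnecting map $\lambda\colon Y\times Y\times[0,1]\to Y$ satisfying $\lambda(a,b,0)=a$, $\lambda(a,b,1)=b$ and $\lambda(a,a,t)=a$; this single map is the engine behind both the equivariant extension and the final gluing.

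First I would define $f\colon Gx\to Y$ by $f(gx)=gy$. Here the stabilizer hypothesis $G_y\le G_x$ is invoked to guarantee that the prescription $gx\mapsto gy$ is independent of the chosen representative, so that $f$ is a well-defined continuous $G$-equivariant map on the closed invariant set $Gx$ with $f(x)=y$. Next, using that $G$-equiconnectedness makes $Y$ a $G$-equivariant neighborhood extensor for the class of proper $G$-spaces, I would extend $f$ to a $G$-equivariant map $\tilde f\colon U\to Y$ on some $G$-invariant open neighborhood $U$ of $Gx$. Finally I would globalize: fixing an equivariant $\phi\colon X\to Y$ (available by hypothesis) together with a $G$-invariant Urysohn function $u\colon X\to[0,1]$ with $u\equiv 1$ on $Gx$ and $\operatorname{supp}u\subseteq U$, I set
$$\psi(z)=\lambda\bigl(\phi(z),\tilde f(z),u(z)\bigr)\ \text{ for }z\in U,\qquad \psi(z)=\phi(z)\ \text{ for }z\notin\operatorname{supp}u.$$
These definitions agree on the overlap, where $u=0$ forces $\lambda(\phi(z),\tilde f(z),0)=\phi(z)$, so $\psi$ is a well-defined continuous $G$-map; and on $Gx$, where $u=1$, we get $\psi=\tilde f=f$, hence $\psi(x)=y$.

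The step I expect to be the main obstacle is the equivariant extension of $f$ off the orbit, that is, upgrading the bare $G$-equiconnectedness of $Y$ into a usable equivariant extension property for the pair $(X,Gx)$. The natural route is to pass to a slice $S$ at $x$: the tube $GS\cong G\times_{G_x}S$ reduces the $G$-equivariant extension problem over $Gx$ to a $G_x$-equivariant extension problem over the single slice $S$, where $G_x$ is compact and $\lambda$ restricts to a $G_x$-equivariant equiconnection on $Y$. Verifying that $\lambda$ indeed yields $G_x$-equivariant extensions on $S$, and that the resulting local data can be induced back up the tube and patched invariantly, is the technical heart of the argument; once this extension is in hand, the remaining gluing is formal.
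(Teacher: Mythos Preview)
Your overall architecture---define $f$ on the orbit, extend to an invariant neighborhood, then blend with the given $\phi$ via the equiconnecting map and an invariant Urysohn function---is exactly the paper's. The divergence is in the extension step, and there you have made things harder than necessary.

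The paper does not use the $G$-equiconnectedness of $Y$ for the extension at all; that hypothesis is invoked only once, in the final blending. Instead the paper appeals to a direct consequence of Palais' slice theorem (recorded as Theorem~\ref{t:rebanada}): for a Lie group acting on a Cartan $G$-space, every orbit $G(x)$ is a $G$-equivariant retract of some invariant neighborhood $U$. Composing the retraction $r\colon U\to G(x)$ with $f\colon G(x)\to Y$ immediately gives $\tilde f=f\circ r$. Your proposed route through ``$G$-equiconnected $\Rightarrow$ $G$-equivariant neighborhood extensor'' is neither needed nor, as you rightly sense, easy to justify in this generality. In fact, if you push your own slice suggestion to its conclusion, the $G_x$-equivariant extension of $f|_{\{x\}}\equiv y$ over the slice $S$ is simply the constant map $s\mapsto y$ (legitimate because $G_x\le G_y$ makes $y$ a $G_x$-fixed point), and inducing this up the tube $G\times_{G_x}S$ recovers exactly $f\circ r$. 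So the ``technical heart'' you anticipate dissolves once you see that the orbit is an equivariant retract.

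Two minor points. Well-definedness of $gx\mapsto gy$ requires $G_x\le G_y$, not $G_y\le G_x$; the inclusion in the theorem statement is a misprint (compare condition~(C) of Theorem~\ref{t:main}), and your sentence invoking it has the direction backwards. Also, continuity of $f$ on $G(x)$ is not automatic from well-definedness: the paper isolates this as Lemma~\ref{l:orbita a orbita}, using that the orbit map $g\mapsto gx$ is open for Cartan $G$-spaces.
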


As usual, the word \textit{map} means  continuous function.  Since the action of the group $\Aff(n)$ on $\mathcal K^n_0$ is proper (see \cite[Theorem 3.3]{NataliaFM}) and $\mathbb R^n$ is $\Aff(n)$-equiconnected (see Section~\ref{s:equiconnected}), Theorem~\ref{t:main 2} directly implies Gr\"unbaum's conjecture.  

Definitions such as proper action or $G$-equiconnected space  is explained in Section~\ref{s:preliminars}. In Section~\ref{s:examples} we  show how Theorem~\ref{t:main 2} can be used to obtain several Gr\"unbaum-like results involving different families of compact convex subsets of $\mathbb R^n$ and different subgroups of affine transformations (Theorems~\ref{t:E(n)}, \ref{t:S(n) 1} and ~\ref{t:S(n) 2}).

\section{Preliminaries}\label{s:preliminars}

All basic terminology about topological spaces can be consulted in \cite{Engelking}.
In order to make our proofs  more comprehensive,   let us recall some basic notions about the theory of $G$-spaces. We refer the reader to the monographs \cite{Bredon} and \cite{Palais2} for a deeper understanding of this theory.

If $G$ is a topological group and $X$ is a $G$-space (namely, a topological space equipped with a continuous action of the group $G$), for any $x\in X$ we denote by $G_x$ the \textit{stabilizer}  or \textit{isotropy group} of $x$, i.e., $G_x=\{g\in G \mid gx=x\}$. For a subset $S\subset X$, the symbol $G(S)$ denotes the $G$-\textit{saturation} of $S$, i.e., $G(S)=\{gs\mid g\in G,\; s\in S\}.$ If $G(S)=S$ then we say that $S$ is a $G$-\textit{invariant} set (or, simply, \textit{invariant} set). In particular, $G(x)$ denotes the $G$-\textit{orbit} of $x$, i.e., $G(x)=\{gx\in X\mid g\in G\}$. The set consisting of all orbits of $X$ equipped with the quotient topology is  denoted by $X/G$ and is called the \textit{orbit space}. The quotient map $\pi:X\to X/G$ is called the \textit{orbit map} and it is always open.

For each subgroup $H\subset G$, the $H$-\textit{fixed point set} $(X)^H$ is the set $\{x\in X\mid H\leq G_x\}$. Clearly, $(X)^{H}$ is a closed subset of $X$.  Observe that in Gr\"unbaum's conjecture, the set $\mathfrak{F}_n(K)$ coincides with the $\Aff(n)_K$- fixed point set $(\mathbb R^n)^{\Aff(n)_K}$.

A  map $f:X\to Y$ between two $G$-spaces is called \textit{equivariant} or a $G$-\textit{map} if
$f(gx)=gf(x)$ for every $x\in X$ and $g\in G$. On the other hand, if $f(gx)=f(x)$ for every $x\in X$, we call $f$ an \textit{invariant} (or $G$-\textit{invariant}).
The reader may notice that an affine invariant point is just an $\Aff(n)$-equivariant map. In order to avoid any confusion, from now on we use exclusively the terminology stated in this section.   

Given $G$-spaces $X$ and $Y$, we denote by $C_G(X,Y)$ the set  of all $G$-equivariant maps. Notice that $C_G(X,Y)$ may be an empty set.  On the other hand, the set $\mathfrak{P}_n$ coincides precisely with the set $C_G(X,Y)$ where $G=\Aff(n)$, $X=\mathcal K^n_0$ and $Y=\mathbb R^n$.

The following folklore is used in the proof of our main result. 

\begin{lemma}\label{l:funcion de tychonoff} Let $X$ be a $G$-space, $x\in X$ an arbitrary point and $V\subset X$ a $G$-invariant neighborhood of the orbit $G(x)$.
\begin{enumerate}
\item If $X/G$ is regular then we can find an invariant neighborhood $U$ of $G(x)$ such that
$$G(x)\subset U\subset\overline U\subset V.$$
\item If  $X/G$ is Tychonoff then  we can find an invariant map 
$\lambda:X\to [0,1]$ such that $\lambda(y)=1$ and $\lambda (z)=0$ for every $y\in G(x)$ and $z\in X\setminus V$.
\end{enumerate}
\end{lemma}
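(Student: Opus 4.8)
The plan is to transport the whole problem to the orbit space $X/G$ by means of the orbit map $\pi:X\to X/G$, exploiting the two basic properties recalled above: $\pi$ is continuous and open. The decisive observation is that a subset $S\subseteq X$ is $G$-invariant if and only if it is saturated, i.e. $S=\pi^{-1}(\pi(S))$; in particular $V=\pi^{-1}(\pi(V))$. I may assume $V$ is open (otherwise replace it by its interior, which remains invariant because each $g\in G$ acts as a homeomorphism). Then, since $\pi$ is open, $\pi(V)$ is an open neighborhood of the point $x^{\ast}:=\pi(x)$ in $X/G$, while $G(x)=\pi^{-1}(x^{\ast})$.

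For item (1), I would apply regularity of $X/G$ at $x^{\ast}$ and the open set $\pi(V)$ to produce an open set $W\subseteq X/G$ with $x^{\ast}\in W\subseteq\overline W\subseteq\pi(V)$. Setting $U:=\pi^{-1}(W)$ gives, automatically, a $G$-invariant set (being a $\pi$-preimage), open (by continuity of $\pi$), and containing $G(x)$ (since $x^{\ast}\in W$). The only genuinely delicate point is the inclusion $\overline U\subseteq V$. This I would deduce from the general fact that continuity of $\pi$ forces $\overline{\pi^{-1}(W)}\subseteq\pi^{-1}(\overline W)$: indeed $\pi^{-1}(\overline W)$ is closed and contains $\pi^{-1}(W)$, hence contains its closure. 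Combining this with $\overline W\subseteq\pi(V)$ and the saturation identity yields $\overline U\subseteq\pi^{-1}(\overline W)\subseteq\pi^{-1}(\pi(V))=V$, as required.

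For item (2), I would argue identically but invoke the Tychonoff (complete regularity) property of $X/G$. Since $x^{\ast}$ does not belong to the closed set $X/G\setminus\pi(V)$, there is a continuous $\mu:X/G\to[0,1]$ with $\mu(x^{\ast})=1$ and $\mu\equiv 0$ on $X/G\setminus\pi(V)$. I then set $\lambda:=\mu\circ\pi$. It is continuous as a composition and $G$-invariant because $\pi$ is constant on orbits, so $\lambda(gz)=\mu(\pi(gz))=\mu(\pi(z))=\lambda(z)$. For $y\in G(x)$ one has $\pi(y)=x^{\ast}$, whence $\lambda(y)=1$; and for $z\in X\setminus V$ the saturation identity forces $\pi(z)\in X/G\setminus\pi(V)$ (otherwise $z\in\pi^{-1}(\pi(V))=V$), so $\lambda(z)=0$.

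I expect the main obstacle to be bookkeeping rather than a deep difficulty: one must use invariance exactly where it is needed — to guarantee that $\pi(V)$ is a genuine neighborhood downstairs and to recover $V$ as a full preimage — and one must control the closure of a preimage by the preimage of a closure. Once these two points are secured, everything reduces to a direct translation between $X$ and $X/G$.
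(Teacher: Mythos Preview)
Your argument is correct; passing to the orbit space via the open, continuous orbit map $\pi$, applying the relevant separation axiom at the point $\pi(x)$ inside the open set $\pi(V)$, and pulling back is exactly the natural proof. The paper does not supply its own argument for this lemma but simply cites \cite[Lemma~2.2]{Jonard}; your write-up is the standard one and almost certainly coincides with what is found there.
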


The proof of Lemma~\ref{l:funcion de tychonoff} can be consulted in \cite[Lemma 2.2]{Jonard}.

\subsection{Cartan and Proper $G$-spaces}

If a non compact group acts continuously on a topological space, many pathologies may occur. This is why we  focus our attention in a specific kind of actions called proper actions.

\begin{definition}\label{d:cartan proper}
Let $G$ be a locally compact Hausdorff group and let $X$ be a Tychonoff $G$-space. 
\begin{enumerate}
    \item $X$ is a \textit{Cartan} $G$-space if every point  $x\in X$ has a  neighborhood $V$ with the property that $\{g\in G: gV\cap V\neq\emptyset\}$ has compact closure in $G$. A set $V$ satisfying this property is called a  \textit{thin} set.
    \item   $X$ is a  \textit{proper} $G$-space (in the sense of Palais \cite{Palais}) if it has an open cover consisting of, so called,  small sets.
 A set  $S\subset X$ is called \textit{small} if  any  point $x\in X$ has  a neighborhood $V$  such that the set $\langle S, V\rangle=\{g\in G\mid gS\cap V\neq\emptyset\}$  has compact closure in $G$.
\end{enumerate}

\end{definition}

The following result shows the relation between Cartan and proper $G$-spaces.

\begin{proposition}(\cite[Corollary 1, p. 303]{Palais})\label{p:cartan propio}
Let $G$ be a locally compact Hausdorff group and let $X$ be a Tychonoff $G$-space. Then $X$ is a proper $G$-space if and only if $X$ is a Cartan $G$-space and the orbit space $X/G$ is regular. In particular, every proper $G$-space is Cartan. 
\end{proposition}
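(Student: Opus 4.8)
The plan is to prove the two implications separately, noting that the closing assertion (\emph{every proper $G$-space is Cartan}) is subsumed by the forward direction. Throughout I would argue with nets, which is legitimate since $G$ is locally compact Hausdorff and $X$ is Tychonoff. For the easy half of the forward direction, assume $X$ is proper and fix $x\in X$. I would choose from the given cover an open small set $S$ containing $x$, and apply the definition of smallness to the point $x$ itself to obtain a neighborhood $V$ of $x$ with $\langle S,V\rangle$ relatively compact. Setting $N=S\cap V$, the inclusion $gN\cap N\subseteq gS\cap V$ gives $\{g\mid gN\cap N\neq\emptyset\}\subseteq\langle S,V\rangle$, so $N$ is thin and $X$ is Cartan; this simultaneously proves the ``in particular'' clause.

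For regularity of $X/G$ (still assuming $X$ proper), the key lemma I would isolate is that if $W\subseteq X$ is small then $\overline{G(W)}\subseteq G(\overline W)$. To prove it, take $y\in\overline{G(W)}$ and a net $g_\alpha w_\alpha\to y$ with $w_\alpha\in W$; smallness of $W$ tested at the point $y$ supplies a neighborhood of $y$ whose associated set $\{g\mid gW\cap\,\cdot\,\neq\emptyset\}$ is relatively compact, which forces the $g_\alpha$ eventually into a fixed compact set. Passing to a convergent subnet $g_\alpha\to g$ and using continuity of the action and of inversion yields $w_\alpha=g_\alpha^{-1}(g_\alpha w_\alpha)\to g^{-1}y$, whence $g^{-1}y\in\overline W$ and $y\in G(\overline W)$. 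With this lemma I would deduce regularity through the open-map characterization: given $\pi(x_0)$ in a saturated open set $\mathcal U$, regularity of $X$ together with the routine fact that subsets of small sets are small lets me pick a small open $W_0$ with $x_0\in W_0\subseteq\overline{W_0}\subseteq\mathcal U$. Then $\mathcal O=G(W_0)$ is saturated and open with $\overline{\mathcal O}=\overline{G(W_0)}\subseteq G(\overline{W_0})\subseteq\mathcal U$; since $\overline{\mathcal O}$ is saturated and closed, $\pi(\overline{\mathcal O})$ is closed, contains $\overline{\pi(\mathcal O)}$, and lies in $\pi(\mathcal U)$, which is precisely what regularity requires.

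For the reverse direction, assume $X$ is Cartan with $X/G$ regular and show each $x$ lies in a small open set. Choose a thin open neighborhood $N$ of $x$; since $\pi$ is open, $\pi(N)$ is an open neighborhood of $\pi(x)$, and regularity gives an open $O$ with $\pi(x)\in O\subseteq\overline O\subseteq\pi(N)$. I claim $S=N\cap\pi^{-1}(O)$ is small, noting $\pi(S)=O$ and hence $\pi(gS)=O$ for every $g$. Given any $y\in X$, I would split according to whether $\pi(y)\notin\overline O$ or $\pi(y)\in\overline O$. In the first case $P=\pi^{-1}(X/G\setminus\overline O)$ is a neighborhood of $y$ with $\pi(P)\cap O=\emptyset$, so no translate $gS$ meets $P$ and $\{g\mid gS\cap P\neq\emptyset\}=\emptyset$. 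In the second case $\pi(y)\in\overline O\subseteq\pi(N)$, so $g_0y\in N$ for some $g_0$; taking $N$ as the test neighborhood of $g_0y$ and using $S\subseteq N$ gives $\{g\mid gS\cap N\neq\emptyset\}\subseteq\{g\mid gN\cap N\neq\emptyset\}$, relatively compact by thinness, and a left translation by $g_0$ transfers this to a neighborhood of $y$. Thus $S$ is small and $X$ is proper.

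The main obstacle is the forward implication that properness forces $X/G$ to be regular, and within it the lemma $\overline{G(W)}\subseteq G(\overline W)$ for small $W$. This is exactly the step where the compactness concealed in the definition of a small set must be extracted, via the subnet argument, to prevent the orbit of a small set from escaping its closure; it is what makes the saturation $G(W_0)$ tame enough to separate a point from a saturated closed set in the quotient.
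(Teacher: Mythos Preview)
The paper does not supply its own proof of this proposition: it is quoted directly from Palais \cite[Corollary~1, p.~303]{Palais} and used throughout as a black box. There is therefore nothing in the paper to compare your argument against.

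That said, your proof is correct and is essentially the argument Palais gives. The decomposition into the two implications, the key lemma $\overline{G(W)}\subseteq G(\overline W)$ for small $W$ obtained by the subnet-in-a-compact-set trick, and the use of regularity of $X/G$ to carve a small set out of a thin one in the converse direction are all standard and carried out cleanly. Two minor points worth making explicit when you write this up: in the regularity argument you silently use that the closure of a $G$-invariant set is $G$-invariant (so that $\pi(\overline{\mathcal O})$ is genuinely closed in $X/G$), and in Case~2 of the converse you should spell out that the test neighborhood of $y$ is $g_0^{-1}N$, with $\{g\mid gS\cap g_0^{-1}N\neq\emptyset\}\subseteq g_0^{-1}\{h\mid hN\cap N\neq\emptyset\}$. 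Both are routine, but they are the places a reader might pause.
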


In the following theorem we summarize some useful properties about Cartan spaces.
The proof of these and other related results can be consulted in \cite{Palais}

\begin{theorem}\label{t:properties} Let $X$ be a Tychonoff space. If $X$ is a Cartan $G$-space then the following conditions hold:
\begin{enumerate}
\item For every $x\in X$ the map $g\mapsto gx$ is an open map of $G$ onto $G(x)$.
\item For every $x\in X$, the orbit $G(x)$ is closed, the stabilizer $G_x$ is compact and $G(x)$ is $G$-homeomorphic to $G/G_x$.

\item If additionally $X$ is a proper $G$-space, then the orbit space $X/G$ is Tychonoff.
\end{enumerate}

\end{theorem}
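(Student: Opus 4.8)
The plan is to drive all three statements with a single engine, namely the compactness of $\overline{\{g\in G\mid gV\cap V\neq\emptyset\}}$ for a thin neighborhood $V$, which is exactly what the Cartan hypothesis supplies. Throughout, fix $x\in X$ and a thin neighborhood $V$ of $x$, and write $T:=\overline{\{g\in G\mid gV\cap V\neq\emptyset\}}$, a compact set. I record first the easy half of (2): the stabilizer $G_x$ is closed in $G$, being the preimage of the closed singleton $\{x\}$ under the continuous orbit map $g\mapsto gx$ (using that $X$ is Hausdorff), and every $g\in G_x$ satisfies $x=gx\in gV\cap V$, whence $G_x\subset T$; a closed subset of the compact set $T$ is compact.

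For the openness in (1) I would reduce, via the equivariance $\phi_x(g_0W)=g_0\phi_x(W)$ of the orbit map $\phi_x\colon g\mapsto gx$ and the fact that $G$ acts by homeomorphisms, to the single claim that $Wx$ is a neighborhood of $x$ in $G(x)$ for every neighborhood $W$ of the identity $e$. I prove this by contradiction: a net $g_\alpha x\to x$ with $g_\alpha\notin WG_x$ eventually lands in $V$, so $g_\alpha x\in g_\alpha V\cap V$ forces $g_\alpha\in T$; compactness of $T$ yields a subnet $g_\alpha\to g$, continuity gives $gx=x$, i.e. $g\in G_x\subset WG_x$, and as $WG_x$ is open the net lies eventually inside it, a contradiction. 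The same mechanism proves the orbit is closed: for $y\in\overline{G(x)}$ pick a thin neighborhood $V_y$ of $y$, take $g_\alpha x\to y$ eventually inside $V_y$, observe that for a fixed reference index $\beta$ the element $g_\alpha g_\beta^{-1}$ sends $g_\beta x\in V_y$ to $g_\alpha x\in V_y$ and so lies in the compact set attached to $V_y$, pass to a subnet $g_\alpha\to g$, and conclude $y=gx\in G(x)$. Finally, the continuous equivariant bijection $G/G_x\to G(x)$ induced by $\phi_x$ is open because both $\phi_x$ (just proved) and the quotient map $G\to G/G_x$ are open; hence it is a $G$-homeomorphism, completing (2).

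For (3) I would first invoke Proposition \ref{p:cartan propio}: since $X$ is proper it is Cartan with $X/G$ regular, so it only remains to upgrade regularity to complete regularity (Hausdorffness being part of regularity here). Because continuous real functions on $X/G$ correspond exactly to invariant continuous functions on $X$, the goal becomes: given $x_0\in X$ and a closed invariant set $A$ with $x_0\notin A$, produce an invariant continuous $\bar f\colon X\to[0,1]$ with $\bar f(x_0)>0$ and $\bar f|_A\equiv0$. My plan is an averaging argument. Using properness and the Haar measure of $G$, construct a continuous cutoff (Bruhat-type) function $\theta\colon X\to[0,\infty)$ whose orbitwise support $\{g\mid\theta(gx)\neq0\}$ has compact closure for each $x$ and which is normalized so that $\int_G\theta(gx)\,dg=1$ for all $x$. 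Then set
$$\bar f(x)=\int_G\theta(gx)\,f(gx)\,dg,$$
where $f\colon X\to[0,1]$ is a continuous function, provided by Tychonoffness of $X$, that vanishes on $A$ and is at least $\tfrac12$ on the compact orbit-segment $Cx_0$, with $C\subset G$ the compact closure of $\{g\mid\theta(gx_0)\neq0\}$. Right-invariance of Haar measure makes $\bar f$ invariant, the compact-support condition makes it continuous, invariance of $A$ gives $\bar f|_A=0$, and $f(gx_0)\ge\tfrac12$ whenever $\theta(gx_0)>0$ gives $\bar f(x_0)\ge\tfrac12$. Descending $\bar f$ to $X/G$ exhibits the required separating function, so $X/G$ is Tychonoff.

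I expect the construction of the normalized cutoff function $\theta$ to be the main obstacle: this is precisely the step that uses properness in an essential way, since without it the orbitwise integral need not be finite, and it relies on the regularity of $X/G$ to manufacture a suitable invariant partition of unity subordinate to a cover by small sets. This is the technical heart of Palais's theory, and in a self-contained account I would either carry out that partition-of-unity construction over $X/G$ or simply cite \cite{Palais} for the existence of $\theta$; the subsequent averaging and descent steps are then routine.
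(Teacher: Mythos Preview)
The paper does not actually prove Theorem~\ref{t:properties}; it only states the result and refers the reader to Palais~\cite{Palais} for the proof. So there is no in-paper argument to compare against, and your proposal is already far more than the paper offers.

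That said, your outline is essentially the argument one finds in Palais's paper, and it is correct. The net-and-compactness mechanism you use for (1) and the closedness of orbits in (2) is exactly how Palais exploits the thin-neighborhood hypothesis; the identification $G(x)\cong G/G_x$ then follows just as you describe. For (3), reducing to the construction of an invariant separating function and obtaining it by Haar-averaging against a Bruhat-type cutoff is again the standard route, and you are right that the only nontrivial ingredient is the existence of the normalized cutoff $\theta$, which is where properness (smallness of a cover, hence an invariant partition of unity over $X/G$) enters; citing \cite{Palais} for that step is exactly what the paper itself does wholesale. One small remark: continuity of $\bar f(x)=\int_G\theta(gx)f(gx)\,dg$ is not entirely automatic---you need that near each point the integrand is supported in a fixed compact of $G$, which is precisely what the small-set cover guarantees---so if you write this up you should make that uniformity explicit rather than declaring it routine.
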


Another important result about Cartan $G$-spaces, is the following.

\begin{theorem}\label{t:rebanada}\cite[Corollary 1, p. 313]{Palais}
Let $G$ be  a Lie group, and $X$ a Cartan $G$-space. Every orbit  $G(x)$ of $X$  is an equivariant retract of an invariant neighborhood of $G(x)$. Namely, for every $x\in X$ there exists an invariant neighborhood $U$ of $G(x)$ and $r:U\to G(x)$ a $G$-equivariant retraction. 
\end{theorem}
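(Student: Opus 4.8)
The plan is to obtain the retraction from the existence of a \emph{slice} at $x$, the technical core of the theory of Cartan actions of Lie groups. Write $H=G_x$. By Theorem~\ref{t:properties}(2) the group $H$ is compact and $G(x)$ is $G$-homeomorphic to the homogeneous space $G/H$; moreover, since every element of $H$ fixes $x$, the singleton $\{x\}$ is an $H$-invariant set. My goal is to construct an $H$-invariant set $S\subset X$ with $x\in S$ (a slice) for which the natural map
\[
\theta\colon G\times_H S\longrightarrow X,\qquad [g,s]\longmapsto gs,
\]
is an equivariant homeomorphism onto an open invariant neighborhood $U=G(S)$ of the orbit. Here $G\times_H S$ denotes the twisted product, i.e.\ the quotient of $G\times S$ by the $H$-action $h\cdot(g,s)=(gh^{-1},hs)$, endowed with the left $G$-action $g'\cdot[g,s]=[g'g,s]$.

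Granting such a slice, the retraction comes for free. Collapsing $S$ onto the $H$-fixed point $x$ is an $H$-equivariant map $S\to\{x\}$, and it induces a $G$-equivariant map
\[
G\times_H S\longrightarrow G\times_H\{x\}=G/H\cong G(x),\qquad [g,s]\longmapsto gH.
\]
Precomposing with $\theta^{-1}$ yields a $G$-map $r\colon U\to G(x)$. Each point of the orbit has the form $gx=\theta[g,x]$ and is sent by $r$ to the coset $gH$, which corresponds to $gx$ under the identification $G/H\cong G(x)$; hence $r$ restricts to the identity on $G(x)$, as required.

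The main obstacle is the construction of $S$, and this is where both hypotheses are spent. Because $G$ is a Lie group and $H$ is a closed compact subgroup, the projection $G\to G/H$ admits a continuous local section near the coset $eH$; this yields a local product decomposition of $G$ transverse to $H$ and lets us untwist the action in directions transverse to the orbit. On the other hand, the Cartan hypothesis supplies a thin neighborhood of $x$, and the compactness of $H$ allows us to average it (for instance by intersecting the finitely many $H$-translates covering the compact set $Hx=\{x\}$, or via normalized Haar measure) into an $H$-invariant thin neighborhood $V$. One then defines $S$ as the transverse plate through $x$ inside $V$ cut out by the local section.

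The delicate point is verifying that $\theta$ is a homeomorphism onto an open set. Equivariance and surjectivity onto $G(S)$ are formal, and continuity of $\theta$ is immediate from continuity of the action. Injectivity reduces to the defining slice property that $gS\cap S\neq\emptyset$ implies $g\in H$: if $g_1s_1=g_2s_2$ with $s_i\in S$, then $g:=g_2^{-1}g_1$ satisfies $gS\cap S\neq\emptyset$, whence $g\in H$, and the twisted product relation gives $[g_1,s_1]=[g_2,s_2]$. Establishing this implication is the crux, and it is exactly where thinness (which confines such $g$ to a set with compact closure) combines with the transversality of $S$ coming from the local section. Finally, openness of $G(S)$ and of $\theta$ follows from the openness of the orbit maps $g\mapsto gy$ granted by Theorem~\ref{t:properties}(1), together with the local product structure; once these facts are in place, the retraction constructed above completes the argument.
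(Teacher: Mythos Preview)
The paper does not supply its own proof of this statement; it is quoted as an external result from \cite[Corollary~1, p.~313]{Palais}. Your sketch---obtaining the equivariant retraction by collapsing a slice $S$ through $x$ to the $H$-fixed point $x$ inside the tube $G\times_H S\cong U$---is exactly the argument Palais gives there (the slice theorem for Cartan actions of Lie groups, followed by the standard tube-to-orbit collapse), so your approach is correct and coincides with the cited source.
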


Recall that a subset $A$ of a topological space $X$ is called a \textit{retract} of $X$ if there is a continuous map $r:X\to A$ such that $r(a)=a$ for every $a\in A$.  In this case the map $r$ is called a \textit{retraction}.

Given two $G$-spaces $X$ and $Y$, and a pair $(x,y)\in X\times Y$,  the function $f:G(x)\to G(y)$ given  by $f(gx)=gy$ is well-defined if and only if $G_x\leq G_y$. However, even if $f$ is well-defined, the function $f$ is not always continuous. This is why we are interested in the  following easy lemma. We include its proof for the sake of completeness. 

\begin{lemma}\label{l:orbita a orbita}
Let $G$ be a topological group and  $X$ and $Y$ $G$-spaces.
Assume that $x\in X$ and $y\in Y$ satisfy that $G_x\leq G_y$. If the map $\theta_x:G\to G(x)$ given by $\theta_x(g)= gx$ is open, then the map
$f:G(x)\to G(y)$ given by $f(gx)=gy$ is continuous. 
In particular, if $X$ is a Cartan $G$-space (or a proper $G$-space), then $f$ is continuous.
\end{lemma}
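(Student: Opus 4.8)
The plan is to realize $f$ as the map induced on an orbit by the universal property of quotient maps, with the openness hypothesis on $\theta_x$ being exactly what guarantees that $\theta_x$ is a quotient map. First I would record that $f$ is well-defined: if $g_1x=g_2x$, then $g_2^{-1}g_1\in G_x\leq G_y$, so $g_2^{-1}g_1y=y$ and hence $g_1y=g_2y$. Thus the assignment $f(gx)=gy$ depends only on the point $gx$ and not on the chosen representative $g$.

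Next I would introduce the two orbit maps $\theta_x:G\to G(x)$, $\theta_x(g)=gx$, and $\theta_y:G\to G(y)$, $\theta_y(g)=gy$. Both are continuous because the actions of $G$ on $X$ and on $Y$ are continuous. By construction the diagram commutes, that is, $\theta_y=f\circ\theta_x$, and the computation above shows precisely that $\theta_y$ is constant on the fibers of $\theta_x$. Now $\theta_x$ is continuous, surjective onto $G(x)$ by definition of the orbit, and open by hypothesis; a continuous open surjection is a quotient map. Consequently $\theta_x$ enjoys the universal property: since $\theta_y=f\circ\theta_x$ is continuous and factors through $\theta_x$, the induced map $f:G(x)\to G(y)$ is continuous. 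This step, the passage from openness of $\theta_x$ to its being a quotient map, is the conceptual crux of the argument, although it is entirely standard point-set topology once stated this way.

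Finally, for the last assertion I would invoke the earlier results of the excerpt. If $X$ is a Cartan $G$-space, then by Theorem~\ref{t:properties}(1) the map $\theta_x:g\mapsto gx$ is open for every $x\in X$, so the openness hypothesis is automatically satisfied and $f$ is continuous. If $X$ is merely assumed to be a proper $G$-space, then by Proposition~\ref{p:cartan propio} it is in particular a Cartan $G$-space, and the previous case applies. I do not anticipate any genuine obstacle here: the only thing requiring care is the well-definedness check and the recognition that an open continuous surjection is a quotient map, after which continuity of $f$ is immediate.
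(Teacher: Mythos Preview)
Your proof is correct and is essentially the same argument as the paper's, just packaged more abstractly: you observe that $\theta_x$ is a quotient map and invoke the universal property, whereas the paper unwinds this by hand, taking a neighborhood $U$ of $gy$, pulling it back to a neighborhood $V\ni g$ in $G$ via continuity of $\theta_y$, and pushing $V$ forward to the open set $Vx\subset G(x)$ via openness of $\theta_x$. The well-definedness check you include is stated in the paper just before the lemma rather than inside the proof, and the final appeal to Theorem~\ref{t:properties} (and Proposition~\ref{p:cartan propio} for the proper case) is identical.
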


\begin{proof}
Since the action of $G$ on $Y$ is continuous, for any $g\in G$ and any open neighborhood $U$ of $gy$ in $Y$, we can find  an open neighborhood $V$ of $g$ in  $G$, such that $$Vy:=\{hy\in Y: h\in V\}\subset U.$$
By hypothesis, the set $Vx:=\{hx \in X : h\in V \}$ is open in the orbit $G(x)$ and contains the point $gx$. Thus
$f(Vx)=Vy\subset U$ and therefore  $f$ is continuous in the point $gx$.

For the last part of the lemma, we simply use the fact that the map $\theta_x$ is always open in the category of Cartan $G$-spaces (Theorem~\ref{t:properties}).
\end{proof}

\subsection{G-equiconnected spaces}\label{s:equiconnected}

A $G$-equiconnected space is the equivariant version of the well-known notion of an equiconnected  space (see e.g. \cite{equiconnected}).
\begin{definition}\label{d:G equiconnected}
A topological  $G$-space $X$ is \textit{$G$-equiconnected} if there exists a map $h:X\times X\times[0,1]\to X$ satisfying the following conditions
\begin{enumerate}
    \item $h(x,y,0)=x$ and $h(x,y,1)=y$ for every $(x,y)\in X\times X$.
     \item $h(x,x,t)=x$ for every $x\in X$ and $t\in [0,1]$.
      \item $h(gx,gy,t)=gh(x,y,t)$ for each $g\in G$, $(x,y)\in X\times X$ and $t\in [0,1]$.
\end{enumerate}
In this case we say that the map $h$ has the \textit{$G$-equivariant connecting property} (or, simply, $h$ has the \textit{$G$-connecting property}).
\end{definition}

A very simple example of a $G$-equiconnected space is the euclidean space $\mathbb R^n$ equipped with the natural action of any subgroup of affine transformations $G\leq \Aff(n)$. Observe that in this case the map $h:\mathbb R^n\times\mathbb R^n\times[0,1]\to \mathbb R^n$
given by 
\begin{equation}\label{f:equiconnected}
 h(x,y,t)=(1-t)x+ty
\end{equation}
 has the $G$-equivariant connecting property. A more general example is any $G$-invariant convex subset of a topological vector $G$-space, where $G$ is a group of affine transformations. In this case the equiconnected map is defined as in formula (\ref{f:equiconnected}).
 
The Minkowski sum also defines a $G$-equiconnected structure in the space of all compact convex subsets of $\mathbb R^n$ (see Remark~\ref{r:minkowski sum equiconnected} below).
Another example of a $G$-equiconnected space  is any   $G$-space admitting a $G$-convex structure (see \cite{Convex strucuture}).

\subsection{Families of compact convex subsets of $\mathbb R^n$}

For a fixed $n\in\mathbb N$, we denote by $\mathcal K^n$ the family of all  compact convex subsets of $\mathbb R^n$.
We  equip $\mathcal K^n$ with the well-known Hausdorff distance
defined by the formula
$$d_H(A,B)=\max\left\{ \sup\limits_{b\in B}d(b,A),~~~~~\sup\limits_{a\in A} d(a, B)\right\},$$
where $d$ is the standard euclidean metric on $\mathbb R^n$. The symbol $\|\cdot\|$ is always  used to denote the euclidean norm in $\mathbb R^n$.

For any  $x\in \mathbb R^n$,  $K\in \mathcal K^n$ and $\varepsilon>0$ we  use the following notations:
$$B(x,\varepsilon):=\{y\in\mathbb R^n\mid d(y,x)<\varepsilon\},$$
$$N(K,\varepsilon):=\{y\in\mathbb R^n\mid d(y,K)<\varepsilon\}.$$

We recall that $d_H(A,B)<\varepsilon$ if and only if $A\subset N(B,\varepsilon)$ and $B\subset N(A,\varepsilon)$.

In section~\ref{s:examples}, we are interested in the following families of convex sets

$$\mathcal K^n_{j^{-}}:=\{K\in\mathcal K^n\mid \dim (K)\leq j\}$$
    $$\mathcal K^n_{ j^+}:=\{K\in\mathcal K^n\mid \dim (K)\geq j\}$$
where $j\in\{0,\dots, n\}$ and $\dim (K)$ denotes the dimension of $K$, namely, the dimension of the smallest affine subspace of $\mathbb R^n$ containing $K$. Observe that with respect to the previous notation, we have that $\mathcal K^n_0=\mathcal K^n_{n^+}$ and $\mathcal K^n=\mathcal K^n_{0^+}=\mathcal K^{n}_{n^-}$.

On the other hand, the family  $\mathcal K^n_{ 1^+}$ is precisely  the family of all non degenerated compact convex subsets of $\mathbb R^n$. Some topological and dynamical  properties of $\mathcal K^n_{ 1^+}$ were studied in \cite{Jonard-Merino}.  In particular, it was proved in that the orbit space $\mathcal K^n_{ 1^+}/\Sim(n)$ is a compact metric space homeomorphic to the Banach-Mazur compactum $BM(n)$, where $\Sim(n)$ stands for the \textit{group of all similarities} of $\mathbb R^n$ (see \cite[Corollary 4.7]{Jonard-Merino}).

The following remark will be used later.
\begin{remark}\label{r:minkowski sum equiconnected}
For any subgroup $G\leq \Aff(n)$ and any $0\leq j \leq n$, the space $\mathcal K^n_{ j^+}$ is $G$-equiconnected. The $G$-connecting map  $h:\mathcal K^n_{ j^+}\times \mathcal K^n_{ j^+}\times [0,1]\to \mathcal K^n_{ j^+}$ is the one defined by means of the Minkowski sum,
\begin{equation*}
    h(A,B,t)=(1-t)A+tB, \;\; A, B\in\mathcal K^n_{j^{+}}, \; \; t\in[0,1].
\end{equation*}

Since $\dim \left((1-t)A+tB\right)\geq \min\{\dim (A), \dim (B)\}\geq j$, the map $h$ is well-defined. 
\end{remark}

\section{Main results}

\begin{theorem}\label{t:main}
Let $X$ and $Y$ be $G$-spaces. Assume that $\{x_1,\dots, x_n\}\subset X$ and $\{y_1,\dots, y_n\}\subset Y$ are finite sets of points satisfying, for each $i\in \{1,\dots, n\}$, the following conditions
\begin{itemize}

    \item[(A)]The map $\theta_{x_i}:G\to G(x_i)$, given by $\theta_{x_i}(g)=gx_i$, is open.
    \item[(B)] There exists an invariant open neighborhood $U_i\subset X$ of $G(x_i)$ and a $G$-equivariant retraction $r_i:U_i\to G(x_i)$.
    \item[(C)] $G_{x_i}\leq G_{y_i}$.
    \item[(D)] $G(x_i)\cap G(x_j)=\emptyset$ whenever $i\neq j$.
\end{itemize}
Then, if $Y$ is $G$-equiconnected, $X/G$ is  Tychonoff and $C_G(X,Y)$ is nonempty, there exists a continuous  $G$-equivariant map $\psi\in C_G(X,Y)$ such that
$\psi(x_i)=y_i$ for every $i\in\{1,\dots ,n\}$.
\end{theorem}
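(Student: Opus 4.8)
The plan is to begin with any $\varphi\in C_G(X,Y)$ (available since $C_G(X,Y)$ is nonempty) and perform an equivariant surgery on it near each orbit $G(x_i)$, so that the modified map sends $x_i$ to $y_i$ while remaining equal to $\varphi$ away from these orbits. The engine of the surgery is the following interpolation principle: if $\lambda:X\to[0,1]$ is invariant and $\varphi_0,\varphi_1:X\to Y$ are $G$-maps, then $x\mapsto h\bigl(\varphi_0(x),\varphi_1(x),\lambda(x)\bigr)$ is again a $G$-map, because invariance of $\lambda$ together with property (3) of $h$ gives equivariance, while property (1) shows this map equals $\varphi_0$ where $\lambda=0$ and equals $\varphi_1$ where $\lambda=1$.

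First I would build, for each $i$, a \emph{local target map}. Conditions (A) and (C) combined with Lemma~\ref{l:orbita a orbita} guarantee that $f_i:G(x_i)\to G(y_i)$, $f_i(gx_i)=gy_i$, is a well-defined continuous $G$-map with $f_i(x_i)=y_i$. Composing with the equivariant retraction $r_i:U_i\to G(x_i)$ furnished by condition (B) produces a $G$-map $f_i\circ r_i:U_i\to Y$, defined on an invariant neighborhood of $G(x_i)$ and still sending $x_i\mapsto y_i$.

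Next I would make the $n$ surgeries independent and then glue. Since the orbits $G(x_i)$ are pairwise disjoint by condition (D), their images under the open continuous orbit map $\pi$ are distinct points of the Hausdorff space $X/G$; separating these finitely many points by disjoint open sets and pulling them back yields disjoint invariant open sets, so that after intersecting with the $U_i$ we may assume the $U_i$ are themselves pairwise disjoint. Because $X/G$ is Tychonoff (hence regular), Lemma~\ref{l:funcion de tychonoff} provides, for each $i$, an invariant $V_i$ with $G(x_i)\subset V_i\subset\overline{V_i}\subset U_i$ together with an invariant function $\lambda_i:X\to[0,1]$ equal to $1$ on $G(x_i)$ and to $0$ on $X\setminus V_i$. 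I then define $\psi:=h(\varphi,\,f_i\circ r_i,\,\lambda_i)$ on each $U_i$ and $\psi:=\varphi$ on the invariant open set $O:=X\setminus\bigcup_i\overline{V_i}$. These invariant open sets cover $X$; the sets $U_i$ are mutually disjoint, and on $U_i\cap O$ one has $\lambda_i=0$, so both prescriptions return $\varphi$, so there is no clash. The pasting lemma gives a continuous $G$-map $\psi\in C_G(X,Y)$, and at each $x_i$, using $r_i(x_i)=x_i$, $f_i(x_i)=y_i$ and property (1) of $h$, we obtain $\psi(x_i)=h(\varphi(x_i),y_i,1)=y_i$.

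The hard part is not any single surgery—realizing one prescribed value is a routine interpolation—but the \emph{simultaneous} realization at all $n$ points without the local modifications interfering. This is precisely where condition (D) and the Hausdorffness of $X/G$ are essential: they let me shrink the neighborhoods $U_i$ to pairwise disjoint invariant sets, so that the supports of the $n$ interpolations are mutually disjoint and the gluing is unobstructed.
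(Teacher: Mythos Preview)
Your proposal is correct and follows essentially the same approach as the paper's proof: build the local target maps $f_i\circ r_i$ from Lemma~\ref{l:orbita a orbita} and condition~(B), separate the orbits into disjoint invariant neighborhoods using the Tychonoff property of $X/G$, manufacture invariant bump functions $\lambda_i$ via Lemma~\ref{l:funcion de tychonoff}, and interpolate between $\varphi$ and the local targets using the $G$-connecting map $h$. The only cosmetic difference is that the paper packages the $n$ bump functions into a single invariant $\lambda=\sum_i\lambda_i$ and the local targets into a single $f$ on $\bigcup_i O_i$, whereas you glue the pieces directly via the pasting lemma on the open cover $\{U_i\}\cup\{X\setminus\bigcup_i\overline{V_i}\}$; since the $U_i$ are disjoint, the two formulations are equivalent.
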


\begin{proof}

Since $X/G$ is Tychonoff and the orbits $\{G(x_1), \dots, G(x_n)\}$ are disjoint, there exist $W_1,\dots, W_n\subset X $ disjoint invariant open sets of $X$ such that $G(x_i)\subset W_i$, $i=1,\dots, n$. 
Let $U_1,\dots, U_n$ and $r_1,\dots ,r_n$ be the neighborhoods and the maps of  condition (B). For every $i\in \{1,\dots, n\}$ define  the set  $O_i=U_i\cap W_i$. Clearly each $O_i$ is an open invariant neighborhood of $G(x_i)$ and the restriction $q_i:=r_i|O_i:O_i\to G(x_i)$ is a $G$-equivariant retraction. 

By Lemma~\ref{l:orbita a orbita},  for every $i\in \{1,\dots, n\}$, the map $f_i:G(x_i)\to G(y_i)$ given by $f_i(gx_i)=gy_i$ is continuous and $G$-equivariant. Then the map $\widetilde f_i:O_i\to G(y_i)$ defined by $\widetilde f_i=f_i\circ q_i$ is continuous and $G$-equivariant too. 
Since $O_i\cap O_j=\emptyset$ if $i\neq j$, the map $f:\bigcup\limits_{i=1}^n O_i\to Y$ given by 
$$f(z)=\widetilde f_i(z),\; \; \text{ if }z\in O_i$$
is well-defined, continuous and $G$-equivariant.

Using again the fact that $X/G$ is a Tychonoff space, we can find open invariant neighborhoods $V_i\subset X$ and continuous invariant maps $\lambda_i:X\to [0,1]$  such that

\begin{itemize}
    \item [a)] $G(x_i)\subset V_i\subset \overline{V_i}\subset O_i$,
    \item [b)] $\lambda_i(G(x_i))=\{1\}$,
    \item [c)] $\lambda_i(X\setminus V_i)=\{0\}$,
\end{itemize}
 for every $i\in \{1,\dots, n\}$ (see Lemma~\ref{l:funcion de tychonoff}).
 Define $\lambda :X\to [0,1]$ by $\lambda (z)=\sum\limits_{i=1}^n\lambda_i(z)$, for every $z\in X$. Clearly $\lambda$ is continuous and invariant. Further, since all neighborhoods ${V_1,\dots ,V_n}$ are disjoint, the map $\lambda$ also satisfies that
 \begin{itemize}
     \item [d)] $\lambda (z)\in [0,1]$ for every $z\in X.$
     
     \item [e)] $\lambda(G(x_i))=\lambda_i(G(x_i))=\{1\}$ for each $i\in\{1,\dots, n\}$.
     \item [f)] $\lambda (z)=0$ if $z\in X\setminus \bigcup\limits_{i=1}^n V_i$.
 \end{itemize}

Now, since the set of $G$-maps is nonempty, we can pick an equivariant map $\varphi\in C_G(X,Y)$. Furthermore, since $Y$ is $G$-equiconnected, there exists a $G$-connecting map $h:Y\times Y\times[0,1]\to Y$.
To finish the proof, define the map $\psi:X\to Y$ by the formula

\begin{equation*}
\psi(z)=\begin{cases}
\varphi(z),&\text{if } z\in X\setminus  \bigcup\limits_{i=1}^n\overline{V}_i,\\
h(\varphi(z), f(z),\lambda (z))&\text{if }x\in \bigcup\limits_{i=1}^n O_i.
\end{cases}
\end{equation*}

Observe that if $z \in \left( X\setminus  \bigcup\limits_{i=1}^n\overline{V}_i \right)\cap \left (\bigcup\limits_{i=1}^n O_i\right)$, then $\lambda (z)=0 $ and therefore $h(\varphi(z), f(z),\lambda (z))=\varphi(z)$. This proves that $\psi$ is well-defined and  continuous. 

To see that $\psi $ is $G$-equivariant, first observe that $X\setminus  \bigcup_{i=1}^n\overline{V}_i$ and $\bigcup_{i=1}^n O_i$ are invariant sets. Thus, for every $g\in G$,  if $z\in X\setminus  \bigcup_{i=1}^n\overline{V}_i$ then $gz\in X\setminus  \bigcup_{i=1}^n\overline{V}_i$. In this case
$$\psi (gz)=\varphi(gz)=g\varphi(z)=g\psi(z).$$
On the other hand, if $z\in \bigcup_{i=1}^n O_i$, then $gz\in \bigcup_{i=1}^n O_i$ and therefore
\begin{align*}
\psi(gz)&=h(\varphi(gz), f(gz),\lambda (gz))&\\
&=h(g\varphi(z), gf(z),\lambda (z))\\
&=gh(\varphi(z), f(z),\lambda (z))=g\psi(z).
\end{align*}
This equality completes the proof.

\end{proof}

After the previous theorem, the  following definition arises naturally.

\begin{definition}
Let $X$ and $Y$ be $G$-spaces. We  say that the pair $(X,Y)$ has the \textit{$G$-finite extension property} if  for  all finite sets   $\{x_1,\dots, x_n\}\subset X $ and $\{y_1,\dots, y_n\}\subset Y$ satisfying 
\begin{enumerate}
    \item $G_{x_i}\leq G_{y_i}$, for every $i\in \{1,\dots, n\}$,
    \item $G(x_i)\cap G(x_j)=\emptyset$ if $i\neq j$,
\end{enumerate}  there exists a $G$-map $\psi:X\to Y$ such that
$\psi(x_i)=y_i$ for every $i\in\{1,\dots ,n\}$.
\end{definition}
The reader may compare this definition with the definition of a $G$-absolute extensor (see Section~\ref{s:final}).

In \cite[Corollary 4.1]{Olaf}, O. Mordhorst proved that the pair $(\mathcal K^n_0, \mathbb R^n)$ has the $\Aff(n)$-finite extension property, generalizing in this way  Gr\"unbaum's conjecture. 

If $G$ is a Lie group and $X$ is a proper $G$-space, conditions (A) and (B) of Theorem~\ref{t:main} are guaranteed (see Theorems~\ref{t:properties} and \ref{t:rebanada}).
Furthermore, in this case the orbit space $X/G$ is always Tychonoff (Theorem~\ref{t:properties}). 
All this allows us to conclude the following

\begin{corollary}\label{c:main}
Let $G$ be a Lie group acting properly on $X$. Suppose that $Y$ is a $G$-equiconnected space such that $C_G(X,Y)$ is nonempty. Then the pair $(X,Y)$ has the $G$-finite extension property.

\end{corollary}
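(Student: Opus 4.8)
The plan is to obtain this as an immediate consequence of Theorem~\ref{t:main}, so the whole proof reduces to verifying that the standing hypotheses of that theorem hold automatically under the assumptions of the corollary. I would start by fixing arbitrary finite sets $\{x_1,\dots,x_n\}\subset X$ and $\{y_1,\dots,y_n\}\subset Y$ satisfying the two conditions in the definition of the $G$-finite extension property, namely $G_{x_i}\leq G_{y_i}$ for every $i$ and $G(x_i)\cap G(x_j)=\emptyset$ whenever $i\neq j$. These are precisely conditions (C) and (D) of Theorem~\ref{t:main}, so nothing more needs to be said about them; the work lies entirely in conditions (A), (B) and in the Tychonoff-ness of the orbit space.

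For the remaining hypotheses I would exploit the structural consequences of properness. Since $G$ is a Lie group it is in particular locally compact and Hausdorff, and since $X$ is a proper $G$-space, Proposition~\ref{p:cartan propio} tells us that $X$ is a Cartan $G$-space and that $X/G$ is regular. Being Cartan, $X$ satisfies all the conclusions of Theorem~\ref{t:properties}: part (1) gives that each orbit map $\theta_{x_i}\colon g\mapsto gx_i$ is open, which is exactly condition (A), and part (3) (using properness again) gives that $X/G$ is Tychonoff. For condition (B) I would invoke Theorem~\ref{t:rebanada}: because $G$ is a Lie group and $X$ is Cartan, every orbit $G(x_i)$ admits an invariant open neighborhood $U_i$ together with a $G$-equivariant retraction $r_i\colon U_i\to G(x_i)$. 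This is precisely condition (B), and it is the one place where the Lie group hypothesis is genuinely used.

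With (A)--(D) established, the two remaining inputs of Theorem~\ref{t:main}, that $Y$ is $G$-equiconnected and that $C_G(X,Y)$ is nonempty, are hypotheses of the corollary and hence available. Applying Theorem~\ref{t:main} to the given data then produces a $G$-equivariant map $\psi\in C_G(X,Y)$ with $\psi(x_i)=y_i$ for all $i$, which is exactly what the $G$-finite extension property requires. Since the finite sets were arbitrary, this finishes the argument.

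I do not expect any genuine obstacle here: the entire difficulty of the result was already absorbed into the proof of Theorem~\ref{t:main} (the partition of unity built from invariant Urysohn functions and the gluing through the equiconnecting homotopy). The only point demanding a little care is the chain of implications proper $\Rightarrow$ Cartan $\Rightarrow$ conditions (A) and (B), together with keeping track of which results need merely a locally compact Hausdorff group and which genuinely need $G$ to be a Lie group (namely Theorem~\ref{t:rebanada} for the equivariant retraction). Everything else is bookkeeping.
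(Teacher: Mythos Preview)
Your proposal is correct and follows essentially the same route as the paper: the paper's proof of the corollary is simply the observation (made in the paragraph immediately preceding it) that for a Lie group acting properly, Theorems~\ref{t:properties} and~\ref{t:rebanada} supply conditions (A), (B) and the Tychonoff-ness of $X/G$, so Theorem~\ref{t:main} applies. Your version is a bit more explicit in tracing the implication proper $\Rightarrow$ Cartan via Proposition~\ref{p:cartan propio}, but the argument is the same.
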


Now, the proof of Theorem~\ref{t:main 2} is a direct consequence of Corollary~\ref{c:main}.

\begin{remark}

Given two $G$-spaces $X$ and $Y$, condition $C_G(X,Y)$ can be guaranteed in certain cases. For example,  if Y has a $G$-fixed point, namely, a point $y_0\in Y$ such that $gy_0=y_0$ for every $g\in G$, then the constant map $\varphi:X\to Y$ given by $\varphi(x)=y$ belongs to $C_G(X,Y)$.

On the other hand, a  necessary condition in order that 
$C_G(X,Y)$ be non empty, is that for every $x\in X$ there exists a point $y\in Y$ such that $G_x\leq G_y$. 
\end{remark}

\section{Examples and Applications}\label{s:examples}

Our main purpose is to generalize Gr\"unbaum's conjecture to different families of convex subsets of $\mathbb R^n$. 
\begin{example}\label{e:dim 1}
We consider the space $\mathcal K^n_{1^-}$ equipped with the action of the group $\Aff(n)$. Namely,  $\mathcal K^n_{1^-}$ is the space of all segments of $\mathbb R^n$. Observe that the action of the affine group $\Aff(n)$ on $\mathcal K^n_{1^{-}}$ is not proper (indeed, the isotropy group of the singleton $\{0\}$ is the general linear group which is not compact, therefore the action cannot be proper). Furthermore, the orbit space $\mathcal K^n_{ 1^{-}}/\Aff(n)$ is not even a $T_1$-space. In fact, $\mathcal K^n_{1^{-}}/\Aff(n)$ is  homeomorphic to the  Sierpi\'nski space $Z$. Recall that the \textit{Sierpi\'nski space} (also named \textit{the connected two-point set}) is the topological space $(Z,\tau)$ where  $Z=\{0,1\}$ and the topology $\tau$ is given by $\tau=\{Z, \emptyset, \{0\}\}$. In this case the homeomorphism $\eta: Z\to \mathcal K^n_{ 1^{-}}/\Aff(n)$ sends the point $0$ to the orbit made of all non degenerated segments, and the point $1$ to the orbit of all singletons. 

However, for every nontrivial  segment $[a,b]\in\mathcal K^n_{1^{-}}$ and every point 
 $y\in \mathbb R^n$ such that the $\Aff(n)$-stabilizer  of $[a,b]$ is contained in the $\Aff(n)$-stabilizer of $y$, there exists a $\Aff(n)$-equivariant map $\psi:\mathcal K^n_{1^-}\to \mathbb R^n$ such that $\psi ([a,b])=y$. Furthermore, the map $\psi$ is the only $\Aff(n)$-equivariant map from $\mathcal K^n_{1^{-}}$ to $\mathbb R^n$.
 \end{example}
 
 \begin{proof}
First observe that the point $y$ is the middle point of $[a,b]$. Indeed, if we denote by $\mathcal F$ the set of points of $\mathbb R^n$ which are fixed by any affine transformation fixing the segment $[a,b]$ (namely, $\mathcal F=(\mathbb R^n)^G$, where $G=\Aff(n)_{[a,b]}$), then $y\in \mathcal F$. Let us denote by $m$ the middle point of $[a,b]$ and let $\Pi$ be the hyperplane of $\mathbb R^n$ orthogonal to $[a,b]$ that passes through $m$.
 If $\rho:\mathbb R^n\to\mathbb R^n$ is the reflection on $\Pi$, then $\rho$ belongs to the stabilizer of $[a,b]$. Therefore $\mathcal F\subset \Pi$. On the other hand, if $\ell$ is the line defined by $[a,b]$, and $\sigma$ is any rotation around $\ell$, then $\sigma$ belongs to the stabilizer of $[a,b]$ and therefore $\mathcal F\subset \ell$. Observe that $\ell $ intersects $\Pi$ in $m$. Thus 
 $$y\in \mathcal F\subset \Pi\cap \ell=\{m\},$$
 as desired.
 
To complete the proof, simply define 
$\psi:\mathcal K^{n}_{1^-}\to \mathbb R^n$ as follows
$$\psi ([c,d])=\frac{1}{2}(c+d).$$
Clearly $\psi$ is a well-defined $\Aff(n)$-equivariant map. Since $y=m=\frac{1}{2}(a+b)$, the map $\psi$ is the desired one, and it is the only $\Aff(n)$-equivariant map from $\mathcal K^{n}_{ 1^-}$ to $\mathbb R^n$.

 \end{proof}

In the following example we will show that the condition of the action to be proper is essential. 

\begin{example}
Let $T$ be the triangle in $\mathbb R^2$ with vertices $(0,0)$, $(1,0)$ and $(0,1)$, and let $b\in\mathbb R^2$ be its centroid, namely $b=(1/3, 1/3)$.  Since the centroid map is an $\Aff(n)$-equivariant map from $\mathcal K^n_{0}$ into $\mathbb R^n$ (see, e.g. \cite{Maria}), it follows that the $\Aff(2)$-stabilizer of $T$ is contained in  the $\Aff(2)$-stabilizer of $b$. However, there is no $\Aff(2)$-equivariant map $\psi:\mathcal K^2\to \mathbb R^n$ such that $\psi (T)=b$. Indeed, if such a map exists, then by the previous example $\psi (I)=(0,1/2)$, where $I=\{0\}\times[0,1]$. On the other hand, the sequence $(T_n)_{n\in\mathbb N}$ converges to $I$, where $T_n$ is the triangle with vertices $\{(0,0), (1/n,0), (0,1)\}$. Since  $T_n$ lies in the orbit of $T$ and $\psi$ is equivariant, then $\psi (T_n)$ is the centroid of $T_n$, namely $\psi (T_n)=\left(\frac{1}{3n}, \frac{1}{3}\right)$. By  continuity of $\psi$ we have that
$$(0,1/2)=\psi (I)=\lim_{n\to \infty}\psi(T_n)=\lim_{n\to\infty}\left(\frac{1}{3n}, \frac{1}{3}\right)=(0, 1/3),$$
a contradiction.
\end{example}

In the previous example, the action of the group $\Aff(2)$ on $\mathcal K^2$ is not proper. On the other hand, the action of the group $\Sim (n)$  (the group of all  similarities of $\mathbb R^n$)  on $\mathcal K^n$ is not proper either. However, in this case  it is interesting  to notice that $C_{\Sim(n)}(\mathcal K^n, \mathbb R^n)$ is not empty and it contains some important selectors such as the Steiner point or the Chebyshev point (see, e.g. \cite[Chapter 12]{Maria}). 

In \cite{Dantzig} it was proved that for any connected and locally compact metric space $X$,  the group  of all isometries of $X$ acts properly on $X$. From this important theorem we can conclude that the action of the \textit{Euclidean group} $E(n)$ (i.e., the group of all isometries of $\mathbb R^n$ with respect to the euclidean norm) acts properly on $\mathcal K^n$. 

Finally, observe that every $\Sim(n)$-equivariant map $\varphi:\mathcal K^n\to\mathbb R^n$ is $E(n)$-equivariant and therefore
 $C_{E(n)}(\mathcal K^n, \mathbb R^n)$ is nonempty. 
All these observations in combination with  Corollary~\ref{c:main} yield the following Gr\"unbaum-like theorem.

\begin{theorem}\label{t:E(n)}
The pair $(\mathcal K^n, \mathbb R^n)$ has the $E(n)$-finite extension property. 
\end{theorem}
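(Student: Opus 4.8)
The plan is to verify the hypotheses of Corollary~\ref{c:main} with $G = E(n)$, $X = \mathcal K^n$ and $Y = \mathbb R^n$, since that corollary immediately delivers the $G$-finite extension property. Concretely, I must check three things: that $E(n)$ is a Lie group acting properly on $\mathcal K^n$, that $\mathbb R^n$ is $E(n)$-equiconnected, and that $C_{E(n)}(\mathcal K^n, \mathbb R^n)$ is nonempty.

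First I would recall that $E(n)$, the group of isometries of $\mathbb R^n$, is a Lie group (it is the semidirect product of the orthogonal group $O(n)$ with the translation group $\mathbb R^n$). For properness, I would invoke the theorem of \cite{Dantzig} quoted in the excerpt: for any connected, locally compact metric space, its full isometry group acts properly. Applying this to $\mathbb R^n$ shows $E(n)$ acts properly on $\mathbb R^n$, and — as the excerpt notes just before the theorem — this transfers to a proper action of $E(n)$ on $\mathcal K^n$ equipped with the Hausdorff metric. This is the technical heart of the argument and the step I expect to require the most care, since one must be sure the induced action on $\mathcal K^n$ inherits properness; I would lean on the fact that isometries of $\mathbb R^n$ induce isometries of $(\mathcal K^n, d_H)$ and that the Hausdorff metric makes $\mathcal K^n$ a suitable space, as already asserted in the paragraph preceding the statement.

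Next, the $E(n)$-equiconnectedness of $\mathbb R^n$ follows from Section~\ref{s:equiconnected}: the straight-line homotopy $h(x,y,t) = (1-t)x + ty$ has the $G$-connecting property for any subgroup $G \leq \Aff(n)$, and in particular for $G = E(n) \leq \Aff(n)$. Finally, for the nonemptiness of $C_{E(n)}(\mathcal K^n, \mathbb R^n)$, I would use the observation made in the excerpt that any $\Sim(n)$-equivariant map $\mathcal K^n \to \mathbb R^n$ is automatically $E(n)$-equivariant (because $E(n) \leq \Sim(n)$), together with the fact that classical selectors such as the Steiner point lie in $C_{\Sim(n)}(\mathcal K^n, \mathbb R^n)$ (see \cite[Chapter 12]{Maria}). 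Hence $C_{E(n)}(\mathcal K^n, \mathbb R^n) \neq \emptyset$.

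With all three hypotheses of Corollary~\ref{c:main} verified, the corollary yields directly that the pair $(\mathcal K^n, \mathbb R^n)$ has the $E(n)$-finite extension property, which is exactly the assertion of the theorem. The only genuinely nontrivial input is the properness of the $E(n)$-action on $\mathcal K^n$; everything else is a matter of quoting the equiconnecting formula and exhibiting one equivariant selector.
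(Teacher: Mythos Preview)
Your proposal is correct and follows essentially the same approach as the paper: the paper's proof is given in the paragraph immediately preceding the theorem statement, where it verifies exactly the three hypotheses of Corollary~\ref{c:main} (properness of the $E(n)$-action on $\mathcal K^n$ via \cite{Dantzig}, nonemptiness of $C_{E(n)}(\mathcal K^n,\mathbb R^n)$ via a $\Sim(n)$-equivariant selector, and $E(n)$-equiconnectedness of $\mathbb R^n$) and then invokes the corollary. Your identification of the properness step as the only nontrivial input matches the paper's emphasis.
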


The identity map $1_{\mathcal K_0^n}:\mathcal K_0^n\to \mathcal K_0^n$ is an $\Aff(n)$-equivariant map, therefore $C_{\Aff(n)}(\mathcal K^n_0,\mathcal K^n_0)$ is non empty. Since the action of $\Aff(n)$ on $\mathcal K^n_0$ is proper, we can apply Corollary~\ref{c:main} to infer the following theorem.
\begin{theorem}\label{t:kn0 kn0}
The pair $(\mathcal K^n_0, \mathcal K^n_0)$ has the $\Aff(n)$-finite extension property. 
\end{theorem}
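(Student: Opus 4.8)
The plan is to recognize Theorem~\ref{t:kn0 kn0} as an immediate instance of Corollary~\ref{c:main}, so the entire argument reduces to verifying the three hypotheses of that corollary for the specific choices $G=\Aff(n)$, $X=\mathcal K^n_0$ and $Y=\mathcal K^n_0$. First I would confirm that $\Aff(n)$ is a Lie group, which is standard since it is an open subset of the affine maps and carries a natural smooth manifold structure. Second, I would invoke the properness of the action: by \cite[Theorem 3.3]{NataliaFM}, the natural action of $\Aff(n)$ on $\mathcal K^n_0$ is proper, exactly as already used in the derivation of Gr\"unbaum's conjecture from Theorem~\ref{t:main 2}.

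The genuinely new points to check, compared with the $Y=\mathbb R^n$ case, are that the target $Y=\mathcal K^n_0$ is $G$-equiconnected and that $C_G(X,Y)$ is nonempty. For equiconnectedness I would appeal to Remark~\ref{r:minkowski sum equiconnected} with $j=n$: since $\mathcal K^n_0=\mathcal K^n_{n^+}$, the Minkowski-sum map $h(A,B,t)=(1-t)A+tB$ has the $\Aff(n)$-connecting property, and the dimension estimate there guarantees that $(1-t)A+tB$ again has nonempty interior whenever $A$ and $B$ do, so $h$ maps into $\mathcal K^n_0$ and is well-defined. For nonemptiness of $C_G(X,Y)$ the simplest witness is the identity map $1_{\mathcal K^n_0}$, which is trivially $\Aff(n)$-equivariant, so $C_{\Aff(n)}(\mathcal K^n_0,\mathcal K^n_0)\neq\emptyset$.

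With these three facts in hand, Corollary~\ref{c:main} applies verbatim and yields that the pair $(\mathcal K^n_0,\mathcal K^n_0)$ has the $\Aff(n)$-finite extension property, which is precisely the statement of the theorem. I do not anticipate a serious obstacle: the structural work has all been done upstream in Theorem~\ref{t:main} and Corollary~\ref{c:main}, and the only mild subtlety is the well-definedness of the Minkowski equiconnecting map on $\mathcal K^n_0$, i.e.\ checking that interiors are preserved under the convex combination. This is handled by the inequality $\dim\bigl((1-t)A+tB\bigr)\geq\min\{\dim(A),\dim(B)\}$ recorded in Remark~\ref{r:minkowski sum equiconnected}, so the verification is routine rather than delicate.
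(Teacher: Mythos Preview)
Your proposal is correct and matches the paper's own argument: the paper also derives Theorem~\ref{t:kn0 kn0} directly from Corollary~\ref{c:main} by noting that the identity map makes $C_{\Aff(n)}(\mathcal K^n_0,\mathcal K^n_0)$ nonempty and that the $\Aff(n)$-action on $\mathcal K^n_0$ is proper, with the $\Aff(n)$-equiconnectedness of $\mathcal K^n_0$ supplied by Remark~\ref{r:minkowski sum equiconnected}. Your write-up simply makes each hypothesis check explicit, but the route is the same.
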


Before our last results   we recall that for every similarity $g\in\Sim(n)$ there exist unique $u\in \mathbb R^n$, $\lambda>0$ and $\sigma\in O(n)$ such that $g(x)=u+\lambda\sigma(x)$ for every $x\in\mathbb R^n$ (where $O(n)$ denotes the orthogonal group). Thus, as a topological space, $\Sim(n)$ is homeomorphic to the topological product $$\mathbb R^n\times (0,\infty)\times O(n)\cong\Sim(n).$$ This will be used in the the following lemma.

\begin{lemma}\label{l:Sim propio}
For every $i\in\{1,\dots, n\}$ the action of the group $\Sim(n)$ on $\mathcal K^n_{i^{+}}$ is proper.
\end{lemma}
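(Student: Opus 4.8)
The plan is to verify directly that $\mathcal K^n_{i^+}$ admits an open cover by \emph{small} sets in the sense of Definition~\ref{d:cartan proper}(2), which is exactly what is required for the $\Sim(n)$-action to be proper. I will work with the decomposition recalled just before the lemma: every $g\in\Sim(n)$ is uniquely $g(x)=u+\lambda\sigma(x)$ with $u\in\mathbb R^n$, $\lambda>0$ and $\sigma\in O(n)$, and this identifies $\Sim(n)$ with $\mathbb R^n\times(0,\infty)\times O(n)$. Consequently a subset of $\Sim(n)$ has compact closure as soon as its translation parts $u$ are bounded and its scaling factors $\lambda$ lie in a compact subinterval of $(0,\infty)$; the rotation parts cost nothing, since $O(n)$ is compact.

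The decisive point is that $i\geq 1$ forces $\diam(K)>0$ for every $K\in\mathcal K^n_{i^+}$ — precisely what fails for singletons, and the reason the action on all of $\mathcal K^n$ is not proper. First I would fix $K\in\mathcal K^n_{i^+}$, set $\delta=\diam(K)>0$, and take as candidate small set the ball $S=\{L\in\mathcal K^n_{i^+}\mid d_H(L,K)<\varepsilon\}$ with $\varepsilon<\delta/2$. These balls are open in $\mathcal K^n_{i^+}$ and cover it, so it remains to check that each such $S$ is small. To that end I fix an arbitrary $M\in\mathcal K^n_{i^+}$ with $\mu=\diam(M)>0$ and choose a neighbourhood $V_M=\{L\in\mathcal K^n_{i^+}\mid d_H(L,M)<\eta\}$ with $\eta<\mu/2$; the task is to bound $\langle S,V_M\rangle=\{g\in\Sim(n)\mid gS\cap V_M\neq\emptyset\}$.

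So suppose $g\in\langle S,V_M\rangle$, i.e. there is $L\in S$ with $gL\in V_M$. Using the elementary estimate $|\diam A-\diam B|\leq 2\,d_H(A,B)$ I get $\diam(L)\in(\delta-2\varepsilon,\delta+2\varepsilon)$ and $\diam(gL)\in(\mu-2\eta,\mu+2\eta)$, and combining this with the scaling identity $\diam(gL)=\lambda\,\diam(L)$ pins $\lambda$ into a fixed compact subinterval $[a,b]\subset(0,\infty)$ depending only on $K,M,\varepsilon,\eta$ (here $\varepsilon<\delta/2$ and $\eta<\mu/2$ guarantee the relevant quantities stay positive). For the translation part, note $L\subset N(K,\varepsilon)$ and $gL\subset N(M,\eta)$ are both bounded; picking any $p\in L$ and writing $u=g(p)-\lambda\sigma(p)$ bounds $\|u\|$ in terms of the finite radii of these neighbourhoods and the upper bound $b$ on $\lambda$, since $\|\sigma(p)\|=\|p\|$. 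Thus $\langle S,V_M\rangle$ is contained in a set of the form $\{(u,\lambda,\sigma)\mid \|u\|\leq C,\ \lambda\in[a,b],\ \sigma\in O(n)\}$, which is relatively compact in $\Sim(n)$. Hence $S$ is small, and $\mathcal K^n_{i^+}$ is a proper $\Sim(n)$-space.

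I expect the main (indeed only) obstacle to be the scaling direction: the translations and rotations are controlled by routine boundedness and the compactness of $O(n)$, whereas the entire force of the hypothesis $i\geq 1$ enters through the diameter estimate that confines $\lambda$ to $[a,b]$. As an alternative route one could instead show that $\mathcal K^n_{i^+}$ is a Cartan $\Sim(n)$-space by applying the same estimate with $M=K$ and then invoke Proposition~\ref{p:cartan propio}; however that path additionally demands a verification that the orbit space is regular, so the direct small-set argument above is cleaner and self-contained.
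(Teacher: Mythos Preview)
Your argument is correct. The diameter estimate controlling $\lambda$ and the boundedness estimate controlling $u$ are exactly the ones that do the work, and your choice of $\varepsilon<\delta/2$, $\eta<\mu/2$ keeps all denominators positive.

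The route, however, differs from the paper's. The paper does \emph{not} verify the small-set definition directly. Instead it proves only that $\mathcal K^n_{1^{+}}$ is a Cartan $\Sim(n)$-space (the same estimate you use, but with $M=K$, i.e.\ verifying that a single Hausdorff ball about $K$ is thin), and then invokes an external result---\cite[Corollary~4.7]{Jonard-Merino}, identifying $\mathcal K^n_{1^{+}}/\Sim(n)$ with a Banach--Mazur compactum---to conclude that the orbit space is regular and apply Proposition~\ref{p:cartan propio}. What you gain is self-containment: your argument needs no knowledge of the orbit space and no outside reference. What the paper gains is a marginally shorter computation, since it only has to run the estimate around a single centre $K$ rather than around an arbitrary pair $(K,M)$; the price is the dependence on the Banach--Mazur result. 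Your closing paragraph anticipates exactly this trade-off.
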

\begin{proof}

First, observe that $\mathcal K^n_{i^{+}}$ is an $\Sim(n)$-invariant subspace of $\mathcal K^n_{1^{+}}$. Then, in order to prove that $\mathcal K^n_{i^{+}}$ is proper it is enough to prove that the action of $\Sim(n)$ on $\mathcal K^n_{1^{+}}$ is proper.

By \cite[Corollary 4.7]{Jonard-Merino} the orbit space $\mathcal K^n_{1^{+}}/\Sim(n)$ is homeomorphic to a  Banach-Mazur compactum, and therefore it is metrizable. In particular, $\mathcal K^n_{ 1^{+}}/\Sim(n)$ is regular. Thus, according to Proposition~\ref{p:cartan propio} it suffices to prove that $\mathcal K^n_{1^{+}}$ is a Cartan $\Sim(n)$-space. 

We consider an arbitrary element $A\in\mathcal K^n_{1^{+}} $. Since $\dim (A)\geq 1$, we infer that $m:=\diam (A)>0$, where
$$\diam (A)=\max\{\|a-a'\|: a, a'\in A\}.$$
Let $\delta>0$ be such that $m-2\delta>0$ and consider 
$$\mathcal O:=\{B\in\mathcal K^n_{ 1^{+}}\mid d_H(B, A)<\delta\},$$ the $\delta$-ball around $A$ with respect to the Hausdorff metric. 
We claim that $\mathcal O$ is a thin neighborhood of $A$. In order to see this, first observe that
if $B\in \mathcal O$, there exist $b_1,b_2\in B$ such that $\|b_1-b_2\|=\diam (B)$. Since $d_H(B, A)<\delta$ we can pick points $a_1,a_2\in A$ with the property that $\|a_j-b_j\|<\delta$ (with $j=1, 2$). Thus, 
$$\diam(B)=\|b_1-b_2\|\leq \|b_1-a_1\|+\|a_1-a_2\|+\|a_2-b_2\|<\diam (A)+2\delta.$$
Analogously we can prove that $\diam (A)<\diam (B)+2\delta$ and therefore we get that
\begin{equation}\label{d:diametros}
    m-2\delta=\diam(A)-2\delta<\diam(B)< \diam(A)+2\delta= m+2\delta.
\end{equation}

Now, our goal is to prove that the set  $\Gamma:=\{g\in \Sim(n)\mid g\mathcal O\cap \mathcal O\neq\emptyset \}$ has compact closure in $\Sim(n)$. 
Let $M>0$ be such that for every $C\in\mathcal O$ and every $x\in C$, 
\begin{equation}\label{d:cota norma}
    \|x\|\leq M.
\end{equation}

Take $g\in \Gamma$ and let $B\in\mathcal O$ be such that $gB\in\mathcal O$. 
We assume that $g(x)=u+\lambda\sigma(x)$ for  certain $u\in\mathbb R^n$, $\lambda>0$ and $\sigma\in O(n)$, and we identify $g$ with the triplet $(u, \lambda, \sigma)\in \mathbb R^n\times (0,\infty)\times O(n)$. 

Observe that $\diam (gB)=\lambda \diam(B)$ and since $gB\in\mathcal O$ we can use inequality~(\ref{d:diametros}) to conclude that
$$ m-2\delta<\lambda \diam(B)<  m+2\delta.$$
Since $B\in\mathcal O$ too, we can use inequality~(\ref{d:diametros}) again to infer that
$$\frac{ m-2\delta}{ m+2\delta}<\lambda<\frac{ m+2\delta}{ m-2\delta}.$$
Thus, $\lambda$ lies in the compact segment $I:=\left[\frac{ m-2\delta}{ m+2\delta}, \frac{ m+2\delta}{ m-2\delta}\right].$

On the other hand, for every $b\in B$ we get that $M\geq \|b\|$ and $M\geq \|g(b)\|$. Then
\begin{align*}
    M&\geq \|g(b)\|=\|u+\lambda\sigma(x)\|\\
    &\geq \|u\|-\|\lambda \sigma (b)\|= \|u\|-\lambda\| \sigma (b)\|.
\end{align*}

From this last inequality we deduce that
\begin{align*}
    \|u\|&\leq M+\lambda\| \sigma (b)\|\leq M+\lambda M=M(1+\lambda)\\
    &\leq M\left(1+\frac{ m+2\delta}{ m-2\delta}\right).
\end{align*}

This implies that $(u,\lambda, \sigma)$ belongs to the compact set 
$$K\times I\times O(n)\subset \mathbb R^n\times (0,\infty)\times O(n)$$
where $K:=\left\{x\in\mathbb R^n\mid  \|x\|\leq M\left(1+\frac{ m+2\delta}{ m-2\delta}\right)\right\}$.
Finally, if we identify every $g\in \Gamma$ with its corresponding triplet $(u, \lambda,\sigma)$ we conclude that 
$\Gamma\subset K\times I\times O(n)$ and therefore $\overline {\Gamma}$ is compact, as desired. 
\end{proof}

\begin{theorem}\label{t:S(n) 1}
The pair $(\mathcal K^n_{i^{+}},\mathbb R^n)$ has the 
 $\Sim(n)$-finite extension property.
\end{theorem}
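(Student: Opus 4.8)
The plan is to reduce the statement to an application of Corollary~\ref{c:main} by verifying its four hypotheses for $G = \Sim(n)$, $X = \mathcal K^n_{i^{+}}$ and $Y = \mathbb R^n$. Recall that the corollary delivers the $G$-finite extension property once we know that $G$ is a Lie group acting properly on $X$, that $Y$ is $G$-equiconnected, and that $C_G(X,Y)$ is nonempty.

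First I would note that $\Sim(n)$ is a Lie group: by the structure recalled just before Lemma~\ref{l:Sim propio}, it is isomorphic as a topological group to $\mathbb R^n\times(0,\infty)\times O(n)$, a product of Lie groups. The properness of the action on $\mathcal K^n_{i^{+}}$ is then exactly the content of Lemma~\ref{l:Sim propio}, so this hypothesis is already in hand. For the equiconnectedness, I would invoke the discussion of Section~\ref{s:equiconnected}: since $\Sim(n)\leq\Aff(n)$, the straight-line homotopy of formula~(\ref{f:equiconnected}) equips $\mathbb R^n$ with a $\Sim(n)$-connecting map, so $\mathbb R^n$ is $\Sim(n)$-equiconnected.

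The only hypothesis that is not entirely immediate is the nonemptiness of $C_{\Sim(n)}(\mathcal K^n_{i^{+}},\mathbb R^n)$, and this is where I expect the small amount of real work to lie. The key observation is that $\mathcal K^n_{i^{+}}$ is a $\Sim(n)$-invariant subspace of $\mathcal K^n$, and, as recalled earlier in this section, the Steiner point (or the Chebyshev point) provides a $\Sim(n)$-equivariant map $\mathcal K^n\to\mathbb R^n$. Restricting such a map to the invariant subspace $\mathcal K^n_{i^{+}}$ yields a continuous $\Sim(n)$-equivariant map, witnessing that $C_{\Sim(n)}(\mathcal K^n_{i^{+}},\mathbb R^n)$ is nonempty. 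With all four hypotheses of Corollary~\ref{c:main} verified, the conclusion that $(\mathcal K^n_{i^{+}},\mathbb R^n)$ has the $\Sim(n)$-finite extension property follows directly.
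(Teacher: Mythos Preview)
Your proposal is correct and follows essentially the same route as the paper's own proof: both verify the hypotheses of Corollary~\ref{c:main} by invoking Lemma~\ref{l:Sim propio} for properness, the straight-line homotopy of Section~\ref{s:equiconnected} for $\Sim(n)$-equiconnectedness of $\mathbb R^n$, and the Chebyshev (or Steiner) point for the nonemptiness of $C_{\Sim(n)}(\mathcal K^n_{i^{+}},\mathbb R^n)$. One small quibble: the paper only asserts that $\Sim(n)$ is \emph{homeomorphic} to $\mathbb R^n\times(0,\infty)\times O(n)$, not isomorphic as a topological group, but this does not affect the argument since $\Sim(n)$ is in any case a closed subgroup of $\Aff(n)$ and hence a Lie group.
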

\begin{proof}

By Lemma~\ref{l:Sim propio}  $\Sim(n)$ is a Lie group acting properly on $\mathcal K^n_{i^{+}}$.  On the other hand, the set $C_{\Sim(n)}(\mathcal K^n_{i^{+}},\mathbb R^n)$ is nonempty (it contains, for instance, the Chebyshev point) and $\mathbb R^n$ is $\Sim(n)$-equiconnected. Then, we can use Corollary~\ref{c:main} to conclude that $(\mathcal K^n_{i^{+}},\mathbb R^n)$ has the 
 $\Sim(n)$-finite extension property, as desired.
\end{proof}

\begin{remark}
The case $i=n$ of Theorem~\ref{t:S(n) 1} can be found in \cite{Kuchment}.
\end{remark}
\begin{theorem}\label{t:S(n) 2}
For every  $1\leq i\leq n$ and $0\leq m\leq n$, the pair $(\mathcal K^n_{i^{+}}, \mathcal K^n_{m^{+}})$ has the $\Sim(n)$-finite extension property.
\end{theorem}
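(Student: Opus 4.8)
The plan is to verify the three hypotheses of Corollary~\ref{c:main} for $G=\Sim(n)$, $X=\mathcal K^n_{i^{+}}$ and $Y=\mathcal K^n_{m^{+}}$, and then invoke the corollary directly. The group $\Sim(n)$ is a Lie group (being homeomorphic to $\mathbb R^n\times(0,\infty)\times O(n)$), and by Lemma~\ref{l:Sim propio} it acts properly on $\mathcal K^n_{i^{+}}$ for every $1\leq i\leq n$. Moreover, since $\Sim(n)\leq\Aff(n)$, Remark~\ref{r:minkowski sum equiconnected} guarantees that the target space $\mathcal K^n_{m^{+}}$ is $\Sim(n)$-equiconnected for every $0\leq m\leq n$. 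Thus the only hypothesis that requires genuine work is showing that $C_{\Sim(n)}(\mathcal K^n_{i^{+}},\mathcal K^n_{m^{+}})$ is nonempty.

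To produce such an equivariant map, I would send each $K\in\mathcal K^n_{i^{+}}$ to its smallest enclosing ball, that is, I would define $\varphi(K)=\overline{B}(c(K),R(K))$, where $\overline{B}(x,r)$ denotes the closed ball, $c(K)$ is the Chebyshev point (the center of the minimal ball containing $K$) and $R(K)$ is the corresponding circumradius. Since $i\geq 1$, every $K\in\mathcal K^n_{i^{+}}$ contains two distinct points, so $R(K)>0$ and $\varphi(K)$ is a nondegenerate ball; hence $\dim\varphi(K)=n\geq m$ and $\varphi(K)\in\mathcal K^n_{m^{+}}$, so $\varphi$ indeed lands in the target space. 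Continuity of $\varphi$ follows from the standard continuity of the Chebyshev point and of the circumradius with respect to the Hausdorff metric, together with the continuity of $(x,r)\mapsto\overline{B}(x,r)$.

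The key point is equivariance. For a similarity $g(x)=u+\lambda\sigma(x)$ with $\sigma\in O(n)$ one has $g(\overline{B}(x,r))=\overline{B}(g(x),\lambda r)$, because $\|g(y)-g(x)\|=\lambda\|y-x\|$; consequently the minimal enclosing ball of $gK$ is exactly the image under $g$ of the minimal enclosing ball of $K$, equivalently $c(gK)=g(c(K))$ and $R(gK)=\lambda R(K)$. Combining these identities gives
\[
\varphi(gK)=\overline{B}\bigl(c(gK),R(gK)\bigr)=\overline{B}\bigl(g(c(K)),\lambda R(K)\bigr)=g\bigl(\overline{B}(c(K),R(K))\bigr)=g\,\varphi(K),
\]
so $\varphi\in C_{\Sim(n)}(\mathcal K^n_{i^{+}},\mathcal K^n_{m^{+}})$ and this set is nonempty. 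With all three hypotheses of Corollary~\ref{c:main} in hand, the corollary immediately yields that the pair $(\mathcal K^n_{i^{+}},\mathcal K^n_{m^{+}})$ has the $\Sim(n)$-finite extension property.

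I do not expect a serious obstacle here: the entire difficulty is concentrated in exhibiting a single equivariant map, and the minimal enclosing ball supplies one uniformly in $i$ and $m$. The only subtlety worth flagging is that the construction must output a set of dimension at least $m$, which holds automatically because a nondegenerate ball always has dimension $n$; this is exactly why positive dimension of the source ($i\geq 1$) is used, to ensure $R(K)>0$. If one preferred to avoid the circumradius, any $\Sim(n)$-equivariant positive scale such as $\diam(K)$ could replace $R(K)$ in the same formula without changing the argument.
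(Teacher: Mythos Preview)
Your proof is correct and follows essentially the same approach as the paper: both verify the hypotheses of Corollary~\ref{c:main} by invoking Lemma~\ref{l:Sim propio} for properness, Remark~\ref{r:minkowski sum equiconnected} for $\Sim(n)$-equiconnectedness of the target, and the circumball map to show $C_{\Sim(n)}(\mathcal K^n_{i^{+}},\mathcal K^n_{m^{+}})\neq\emptyset$. Your write-up is in fact more explicit than the paper's, which simply cites \cite{Maria} for the circumball's continuity and equivariance, while you spell out why $i\geq 1$ ensures the circumball is $n$-dimensional and verify equivariance by hand.
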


\begin{proof}
By the Remark~\ref{r:minkowski sum equiconnected}, the space $\mathcal K^{n}_{ m^+}$ is $\Aff(n)$-equiconnected and therefore it is $\Sim(n)$-equiconnected. Furthermore, the map $\mathcal K^{n}_{i^{+}}\to \mathcal K^{n}_{m^{+}}$ assigning to each compact convex set its circumball is well-defined, continuous, and $\Sim(n)$-equivariant (see, e.g. \cite{Maria}). Thus, $C_{\Sim(n)}(\mathcal K^{n}_{i^{+}},\mathcal K^{n}_{m^{+}})$
is non empty for every $1\leq i\leq n$ and $0\leq j\leq n$. Finally we can use Corollary~\ref{c:main} and Lemma~\ref{l:Sim propio} to conclude that 
$(\mathcal K^n_{i^{+}}, \mathcal K^n_{m^{+}})$ has the $\Sim(n)$-finite extension property.
\end{proof}

For the last theorem we denote by $\mathcal K_w^n$ the subspace of $\mathcal K^n_0$ consisting of all convex bodies  of constant width. Recall that an element $A\in \mathcal K^n_0$ is said to be of constant width if the distance between any two of its parallel support hyperplanes is constant. Equivalently, a convex body  $A\in \mathcal K^n_0$ has constant width if and only if there exists $d>0$ such that
$$A+(-A)=d\mathbb B^n=\{x\in\mathbb R^n\mid \|x\|\leq d\}.$$
From this we can easily deduce that $gA\in \mathcal K^n_w$ for every $A\in \mathcal K^n_w$ and $g\in \Sim (n)$. Thus, $\mathcal K^n_w$ is a $\Sim(n)$-invariant subspace of  $\mathcal K^n_0$. 
For more information about bodies of constant width we refer the reader to  \cite{Montejano} and \cite[Chapter 7, \S 6]{Webster}.

\begin{theorem}
For every $i\geq 1$, the pair $(\mathcal K^n_{i^+}, \mathcal K^n_w)$ has the 
 $\Sim(n)$-finite extension property.
\end{theorem}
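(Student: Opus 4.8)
The plan is to deduce the statement directly from Corollary~\ref{c:main}, applied with $G=\Sim(n)$, $X=\mathcal K^n_{i^+}$ and $Y=\mathcal K^n_w$. For this I must verify the three hypotheses of that corollary: that $\Sim(n)$ is a Lie group acting properly on $\mathcal K^n_{i^+}$, that $\mathcal K^n_w$ is $\Sim(n)$-equiconnected, and that $C_{\Sim(n)}(\mathcal K^n_{i^+},\mathcal K^n_w)$ is nonempty. The first is immediate: $\Sim(n)\cong\mathbb R^n\times(0,\infty)\times O(n)$ is a Lie group, and since $i\geq 1$, Lemma~\ref{l:Sim propio} already gives that the action of $\Sim(n)$ on $\mathcal K^n_{i^+}$ is proper.

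For the equiconnectedness I would show that $\mathcal K^n_w$ is closed under the Minkowski convex combinations of Remark~\ref{r:minkowski sum equiconnected}, so that the same formula $h(A,B,t)=(1-t)A+tB$ restricts to a $\Sim(n)$-connecting map on $\mathcal K^n_w$. Indeed, if $A,B\in\mathcal K^n_w$ have constant widths $d_A,d_B>0$, then $A+(-A)=d_A\mathbb B^n$ and $B+(-B)=d_B\mathbb B^n$. Writing $C=(1-t)A+tB$ and using $-C=(1-t)(-A)+t(-B)$ together with the identities $\alpha S+\alpha T=\alpha(S+T)$ and $\alpha\mathbb B^n+\beta\mathbb B^n=(\alpha+\beta)\mathbb B^n$ for $\alpha,\beta\geq 0$, one computes
\[
C+(-C)=(1-t)\bigl(A+(-A)\bigr)+t\bigl(B+(-B)\bigr)=\bigl((1-t)d_A+t d_B\bigr)\mathbb B^n .
\]
Since $(1-t)d_A+td_B>0$, the set $C$ is a convex body of positive constant width, i.e. $C\in\mathcal K^n_w$. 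Conditions (1)--(3) of Definition~\ref{d:G equiconnected} for the restricted $h$ are then inherited verbatim from Remark~\ref{r:minkowski sum equiconnected} (using that $(1-t)A+tA=A$ for convex $A$, and the $\Sim(n)$-equivariance of the Minkowski combination).

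The main obstacle is exhibiting a single $\Sim(n)$-equivariant map $\mathcal K^n_{i^+}\to\mathcal K^n_w$, since it is not obvious that one can canonically attach a constant-width body to an arbitrary convex set. The key observation is that a ball is itself of constant width, so the circumball map already used in the proof of Theorem~\ref{t:S(n) 2} does the job here without modification. For $K\in\mathcal K^n_{i^+}$ the hypothesis $i\geq 1$ makes $K$ non-degenerate, so its circumradius is positive and the circumscribed ball of $K$ is a genuine $n$-dimensional ball, hence an element of $\mathcal K^n_w$. This assignment is continuous and $\Sim(n)$-equivariant, because similarities send balls to balls and, scaling all distances uniformly, preserve the smallest-enclosing-ball property; therefore $C_{\Sim(n)}(\mathcal K^n_{i^+},\mathcal K^n_w)\neq\emptyset$. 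With all three hypotheses in place, Corollary~\ref{c:main} applies and yields that the pair $(\mathcal K^n_{i^+},\mathcal K^n_w)$ has the $\Sim(n)$-finite extension property, which completes the proof.
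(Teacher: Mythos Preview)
Your proof is correct and follows essentially the same approach as the paper: verify the three hypotheses of Corollary~\ref{c:main} using Lemma~\ref{l:Sim propio} for properness, the Minkowski convex combination for $\Sim(n)$-equiconnectedness of $\mathcal K^n_w$, and the circumball map for nonemptiness of $C_{\Sim(n)}(\mathcal K^n_{i^+},\mathcal K^n_w)$. The only difference is that you supply the explicit computation $C+(-C)=((1-t)d_A+td_B)\mathbb B^n$ where the paper merely cites \cite{NosotrosConstant} for the fact that $(1-t)A+tB\in\mathcal K^n_w$.
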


\begin{proof}
For any two elements $A, B\in\mathcal K^n_w$ and every $t\in[0,1]$, the set $tA+(1-t)B$ belongs to $K^n_w$ (see, e.g. \cite[p.350]{NosotrosConstant}).  Thus, $K^n_w$ is $\Sim(n)$-equiconnected. 

Since $\mathcal K^n_{i^+}$ is a $\Sim (n)$-proper space (Lemma \ref{l:Sim propio}), it only rest to prove that $C_{\Sim(n)}(\mathcal K^n_{i^+}, \mathcal K^n_w)$ is nonempty. However this follows from the fact that every closed ball of positive radius belongs to $\mathcal K_w^n$. Therefore the map assigning to each $A\in \mathcal K^n_{i^+}$ its circumball belongs to $C_{\Sim(n)}(\mathcal K^n_{i^+}, \mathcal K^n_w)$.
\end{proof}

\section{Final remarks and questions}\label{s:final}

 Let $G$ be a topological group and $X$ a
 $G$-space.  
The $G$-finite extension property is related to the property of being an equivariant absolute extensor.  We recall that a $G$-space $Y$ is called an \textit{equivariant absolute neighborhood extensor} for $X$ (denoted by $Y\in G$-$\mathrm{ANE}(X)$) if, for any closed invariant subset $A\subset X$ and any equivariant map $f:A\to Y$, there exists an invariant neighborhood $U$ of $A$ in $X$ and an equivariant map $F:U\to Y$ such that $F|_{A}=f$.  If we can always take $U=X$, then we say that $Y$ is a $G$-\textit{equivariant absolute extensor} for $X$ (denoted by $Y\in G$-$\mathrm{AE}(X)$). 
If $Y$ is a $G$-$\mathrm{ANE}(X)$ ($Y\in G$-$\mathrm{AE}(X)$) for every metric $G$-space $X$, then we simply say that $Y$ is a \textit{$G$-absolute neighborhood extensor} (\textit{$G$-absolute extensor}) and we denote it by $Y\in G$-$\mathrm{ANE}$ ($Y\in G$-$\mathrm{ANE}$). We refer the reader to  \cite{equivariant theory of retracts} for more information about equivariant absolute extensors.  The reader may compare this definition with the (non equivariant) definition of an absolute extensor (see e.g. \cite[Chapter 6]{Sakai}).

Our interest in $G$-$\mathrm{AE}$'s arises from the fact that the $G$-finite extension property is a weaker property than the $G$-absolute extensor property.

\begin{remark}\label{r:AE FINITE EXTENSION}
If $X$ is a cartan $G$-space and $Y\in G$-$\mathrm{AE}(X)$ then  the pair $(X, Y)$ has the $G$-finite extensor property. Indeed, 
let  $\{x_1,\dots, x_n\}\subset X $ and $\{y_1,\dots, y_n\}\subset Y$  be two finite sets satisfying 
 $G_{x_i}\leq G_{y_i}$ for every $i\in \{1,\dots, n\}$. Since every orbit $G(x_i)$ is closed in $X$, the set $A:=\bigcup\limits_{i=1}^nG(x_i)$ is an invariant closed subset of $X$.
 If $G(x_i)\cap G(x_j)=\emptyset$  (for $i\neq j$), then the map $f:A\to Y$
defined by 
$f(gx_i)=gy_i$ is well-defined, continuous and equivariant.
 Therefore,  the definition of a $G$-equivariant absolute extensor guarantees the existence of a  continuous and $G$-equivariant map $F:X\to Y$ such that $F|_A=f$. In particular $F(x_i)=y_i$, as desired. 
\end{remark}

 After Remark~\ref{r:AE FINITE EXTENSION}  it is natural to ask under which conditions the converse implication would be true. In particular, since the pair $(\mathcal K^n_0,\mathbb R^n)$ has the $\Aff(n)$-finite extension property, it is natural to ask the following

\begin{question}\label{q:Rn AE}
 Does $\mathbb R^n\in\Aff(n)$-$\mathrm{AE}(\mathcal K^n_0)$?
\end{question}

We notice that if $G$ is a compact group acting linearly on a locally convex linear space $L$, then every complete convex invariant subset $V\subset L$ is a $G$-absolute extensor (see \cite[Theorem 2]{Antonyan 3}). This result is an equivariant version of the well-known Dugundji extension theorem.  Some other equivaraint extension theorems for compact Lie groups can be found in \cite{Antonyan 1, Convex strucuture}.

However, if $G$ is a non compact group, the problem of determining  if a $G$-space $X$ is a $G$-$\mathrm{AE}$ is not an easy task and the results found in the literature  (see e.g. \cite[Theorem 4.1]{equivariant selection}) do not provide a large class of examples of $G$-equivariant absolute extensors.  In particular, the following simple question is unknown.

\begin{question}
 Is $\mathcal K^n_0$ an $\Aff(n)$-absolute  (neighborhood) extensor?
\end{question}

For the last questions, we define for every $i\in\{0, \dots, n\}$ the family $\mathcal K^n(i)$ of all $i$-dimensional compact convex sets of $\mathbb R^n$. Namely, 
$$\mathcal K^n(i):=\mathcal K^n_{i^-}\cap\mathcal K^n_{i^+}.$$
Observe that $\mathcal K^n(0)=\mathbb R^n$ and $\mathcal K^n(n)=\mathcal K^n_0$. Thus, when $i\in\{0, n\}$ we have  that the pair $(\mathcal K^n_{0}, \mathcal K^n(i))$ has the $\Aff(n)$-finite extension property (\cite[Corollary 4.1]{Olaf} and Theorem~\ref{t:kn0 kn0}). 
However, if $i\notin \{0, n\}$, the Minkowski sum is not an $\Aff(n)$-connecting map on $\mathcal K^n(i)$. This suggests the following natural questions:

\begin{question}
 Is there an $i\in\{1,\dots, n-1\}$ such that the pair $(\mathcal K^n_0, \mathcal K^n(i))$ has the $\Aff(n)$-finite extension property? Or the $G$-finite extension property for a certain closed subgroup $G\leq \Aff(n)$?
\end{question}

\begin{question}
 For which $i, j\in\{0,\dots, n\}$ and $G\leq \Aff(n)$, does the pair $(\mathcal K^n_{j^+}, \mathcal K^n(i))$ have the $G$-finite extension property?
\end{question}

\textit{Acknowledgement}. The author want to thank the anonymous referee for the careful reading and the suggestions that helped to improve the final version of this paper.

\bibliographystyle{amsplain}

\end{document}